\def\dst{\displaystyle}
\newtheorem{thm}{Theorem}[section]
\newtheorem*{thmi}{Main Theorem}
\newtheorem{prop}[thm]{Proposition}
\newtheorem{lem}[thm]{Lemma}
\newtheorem{cor}[thm]{Corollary}
 \theoremstyle{definition}
  \newtheorem{definition}[thm]{Definition}
 \newtheorem{question}[thm]{Question}
  \newtheorem{example}[thm]{Example}
    \newtheorem{rem}[thm]{Remark}
\DeclareMathOperator {\Hn}{\operatorname{H} }
\DeclareMathOperator {\cd}{\operatorname{cd} }
   \DeclareMathOperator {\gd}{\operatorname{gd} }
     \DeclareMathOperator {\X}{\mathfrak{X}}  
  \DeclareMathOperator{\pd}{pd}
      \DeclareMathOperator {\Ob}{\operatorname{obj}}
    \def\epi{\twoheadrightarrow}
     \DeclareMathOperator {\colim}{\operatorname{colim}}
 \DeclareMathOperator {\Mod}{\operatorname{Mod}} 
 \newcommand{\RMod}{R\,\mbox{-}\Mod}
 \newcommand{\DMod}{\mathfrak{D}\,\mbox{-}\Mod}
    \DeclareMathOperator{\Ab}{\mathcal Ab}
  \def\OCOG{{\mathcal O}_{\mathfrak{CO}}G}
  \def\OCG{{\mathcal O}_{\mathfrak C}G}
   \def\OCGp{{\mathcal O'}_{\mathfrak C}G}
     \def\OCO{{\mathcal O}_{\mathfrak{CO}}}
    \def\MCOG{{\mathcal M}_{(\FC,\FO)}G}
  \def\MCOOG{{\mathcal M}_{\FCO}G}
   \def\MCO{{\mathcal M}_{(\FC,\FO)}}
  \def\MCOO{{\mathcal M}_{\FCO}}
   \def\FC{\mathfrak C}
   \def\FK{\mathfrak K}
\def\FO{\mathfrak O}
\def\FCO{\mathfrak{CO}}
   \def\SCO{{}_G\mathcal S_{(\FC,\FO)}}
\def\SCOO{{}_G\mathcal S_{\FCO}}
\def\SCOF{{}_G\mathcal S_{(\FC,\FO)}^f}
\def\Ob{\operatorname{\bf Ob}}
\def\Mor{\operatorname{\bf Mor}}
\def\Hom{\operatorname{\rm Hom}}
\def\Ext{\operatorname{\rm Ext}}
\def\Z{\mathbb Z}
\def\N{\mathbb N}
\def\Q{\mathbb Q}
\def\O{\mathcal O}
\def\CS{\mathcal S}
\def\FC{\mathfrak C}
\def\D{\mathfrak{D}}
\def\FO{\mathfrak O}
\def\FF{\mathfrak F}
\def\FCO{\mathfrak{CO}}
\def\Msys{(\FC,\FO)}
\def\uZ{\underline{\Z}}
\numberwithin{equation}{section}
\begin{document}

\title[T.d.l.c.~ mackey]{On cohomological dimensions of totally disconnected locally compact groups}
\author{Ilaria Castellano}
\address{Dipartimento di Scienze Matematiche, Politecnico di Torino, Torino, 10129 }
\email{ilaria.castellano@polito.it}
\author{Nadia Mazza}
\address{School of Mathematical Sciences, Lancaster University, Lancaster, LA1 4YF}
\email{n.mazza@lancaster.ac.uk}

\author{Brita Nucinkis}
\address{Department of Mathematics, Royal Holloway, University of London, Egham, TW20 0EX}
\email{Brita.Nucinkis@rhul.ac.uk}

\keywords{Mackey functors, t.d.l.c.~groups, cohomological dimension}

\begin{abstract} In this paper, we introduce Mackey functors for a t.d.l.c.~group and define the cohomological dimension of this group over the Mackey category. We then compare this dimension to the rational discrete cohomological dimension defined by Castellano and Weigel, as well as to the Bredon cohomological dimension of that {t.d.l.c.}~group with respect to the family of compact open subgroups. We also extend results about the geometric dimension of a t.d.l.c.~group. 

\end{abstract}

\maketitle

\section{Introduction}

Totally disconnected locally compact (~= t.d.l.c.~) groups are a class of groups containing both the class of profinite groups and the class of discrete groups. For both of these two classes of groups there is a well-developed theory of Mackey functors beginning with finite groups~\cite{tw-structure}, infinite discrete groups~\cite{degrijse, mp-n, stjg},  and profinite groups~\cite{BB, ds}.  The list of citations is by no means complete, but serves as a starting point. In the case of discrete groups, the category of {Mackey functors} has enough projective objects~\cite{mp-n,tw-structure} and, for finite groups, the Burnside Mackey functor is projective. This is not the case for infinite discrete groups, where one considers Mackey functors evaluated at the family of finite subgroups. Hence, the Mackey cohomological dimension of a given group is defined to be the cohomological dimension of the Burnside Mackey functor. In~\cite{mp-n} the authors compared this dimension to the Bredon cohomological dimension, with respect to the family of finite subgroups, and to the rational cohomological dimension.

In this note, we shall extend these results to t.d.l.c. groups and define Mackey functors in the most general setting via Mackey systems $(\FC, \FO)$ as defined in~\cite{BB}.  Here $\FC$ is a family of closed subgroups and $\FO$ a subfamily satisfying certain finiteness conditions, see Definition~\ref{def:Mackey sys}. We then concentrate on Mackey functors and Bredon cohomology with respect to the family $\FCO$ of compact open subgroups and compare the Mackey cohomological dimension $\cd_{\MCOO} G$ with  the rational discrete cohomological dimension $\cd_{\Q} G$~\cite{cw16} and  that of Bredon functors $\cd_{\O_\FC} G$~\cite{lueckbook}. In particular, we prove:
\begin{thmi} Let $G$ be a t.d.l.c. group. Then
$$\cd_{\Q} G  \leq \cd_{\MCOO} G \leq \cd_{\O_\FCO} G.$$
\end{thmi}
The first difficulty here lies in the fact that one needs a suitable small category that has pull-backs to be able to define Mackey functors for t.d.l.c. groups. By analogy with the discrete case, where an arbitrary $G$-set can be regarded as a coproduct of transitive $G$-sets, we begin by considering the category ${}_G\Sigma_\FC,$ which is the free co-product completion of the category of homogeneous $G$-spaces with stabilisers in $\FC$.  As above, $\FC$ is a family of closed subgroups. However, this category is much too large and we need to restrict ourselves to a certain subcategory where the objects are $G$-spaces with finitely many orbits (see Definitions~\ref{def:sco} and~\ref{def:Mackey cat-lindner}). To compare with the rational discrete cohomology of~\cite{cw16}, we also need to restrict ourselves to the family $\FCO$ of compact open subgroups as stabilisers for our objects. The main difficulty here lies in the fact that, for $G$ non-discrete, the trivial subgroup is not in $\FCO$. Note that the tools used in~\cite{mp-n} heavily rely on the fact that the trivial subgroup belongs to the family $\FCO$ of all finite subgroups of a discrete group $G.$

Related to these various cohomological dimensions is the geometric dimension of a group. In Section~\ref{sec:cw}, we consider classifying spaces for proper actions, and extend some known results about the geometric dimension of a t.d.l.c. group, see \cite[Theorems 0.1,~0.2]{lueck-meintrup}.

\section{Preliminaries}\label{sec:prelim} 
Throughout, all groups are topological Hausdorff and totally disconnected locally compact (t.d.l.c), unless otherwise stated, and all subgroups are closed. Abstract groups are endowed with the discrete topology.
A {\em family} of subgroups of a given group $G$ is a collection of (closed) subgroups of $G$ that is closed under conjugation and finite intersections, and we generally denote such family by $\FC$, omitting $G$ if the group is unambiguous. 
Similarly, we write $\FCO$ for the family of all compact open subgroups and $\FK$ for that of all compact subgroups. 
\subsection{Mackey systems}  We follow the set-up of~\cite{BB} specialised to t.d.l.c. groups.
\begin{definition}\label{def:Mackey sys} 
Let $G$ be a t.d.l.c. group and let $\FC$ be a family of subgroups of $G$. Denoting $\FC(H)=\{U\leq H \,|\, U \in \FC\}$ for $H \in \FC$, a {\em Mackey system} $(\FC, \FO)$ for $G$ consists of the family $\FC$ and a family  $\FO(H) \subseteq \FC(H)$ for every $H \in \FC$, subject to the following axioms:
\begin{itemize}
\item[(i)] $[H:U] < \infty,$ for all $U\in\FO(H),$ 
\item[(ii)] $\FO(U) \subseteq \FO(H),$ for all $U\in\FO(H),$
\item[(iii)] $\FO(H^g)= \FO(H)^g$ for all $g \in G,$
\item[(iv)] $U \cap V \in \FO(V)$ for all $U,V \in \FO(H),$
\item[(v)] $H\in\FO(H)$ for all $H\in\FC$.
\end{itemize}
\end{definition}

If $\FO(H)=\FC(H)$ for all $H\in \FC,$ we denote the Mackey system $(\FC, \FO)$ by $(\FC).$
For example, the family $\FF$ of all finite subgroups of $G$ gives a Mackey system $(\FF)$ for $G$, and it has been studied extensively for infinite discrete groups~\cite{mp-n, degrijse, stjg}.
More generally, for any t.d.l.c.~group $G$, the family $\FCO$ of all compact open subgroups of $G$ gives a Mackey system $(\FCO)$ for $G$. 
\subsection{Categories of $G$-spaces}\label{sec:g-spaces}
For the purposes of this paper, a $G$-{\em space} consists of a topological Hausdorff space $X$ and a continuous left action $\rho\colon G\times X\to X$. We denote the $G$-space $(X, \rho)$ just by its underlying topological space $X$ and write $\rho(g,x)=gx$.  The $G$-spaces form a category with morphisms the continuous $G$-maps between $G$-spaces. For $x\in X$, the closed subgroup $G_x=\{g\in G\mid gx=x\}$ is called {\em stabiliser} of $x$ in $G$ (cf.~\cite[Proposition~3.5]{trans}). The set $R = \{(x,gx)\mid x\in X, g\in G\}$ is an equivalence relation on $X$, whose  equivalence classes are the {\em $G$-orbits} on $X$. In particular, we call an equivalence class $Gx$ the {\em orbit} through $x\in X$. We let $G\backslash X$ denote the set of all $G$-orbits on $X$ and $[G\backslash X]$ a set of orbit representatives. 
Clearly, the set of \mbox{$G$-orbits} $G\backslash X$ is a topological space, called the {\em orbit space} (of the $G$-space $X$), and it is endowed with the quotient topology induced by the map $X \to G\backslash X$ sending an element to its orbit. 
For example, given a closed subgroup $H$ of $G$, the orbit space of the right multiplication action $G\times H\to G$ of $H$ on $G$ is the space of $H$-cosets $G/H$. Moreover, the group $G$ acts continuously on the Hausdorff space $G/H$ (cf.~\cite[Example~1.4 and Proposition~3.3.]{trans}). Any $G$-space $G/H$ with this action is called {\em homogeneous $G$-space}. Recall that there exists a $G$-map $\phi\colon G/H\to G/K$ if and only if $H$ is conjugate to a subgroup of $K$. More precisely, for every $a\in G$ such that $a^{-1}Ha\leq K$, yields a $G$-map $\phi$ sending $g\in G$ to $\phi(gH)=gaK$. Consequently, every $G$-map $G/H\to G/K$ between homogeneous $G$-spaces is continuous.
Note that $G/H$ and $G/K$ are homeomorphic if and only if $H$ and $K$ are conjugate. 
The {\em orbit category} $\O G$
is the small category whose
objects are the homogeneous $G$-spaces $G/H$, where 
$H$ is a closed subgroup of $G$, and morphisms are $G$-maps among them.
We denote by $\OCG$ the full subcategory of $\O G$ with objects the {homogenous $G$-spaces} $G/H$ with $H\in\FC.$ 

\begin{definition}
Let  ${}_G\Sigma_\FC$ be the free coproduct completion  of $\OCG$.  This is the category of all collections $ (G/H_i)_{i\in I}$
of objects of $\OCG$, where a morphism $$f\colon (G/H_i)_{i\in I} \to (G/K_j )_{j\in J}$$ consists of a map $\hat f : I \to J$, where $I$ and $J$ are indexing sets, possibly empty, and
 a collection of $G$-maps $f_i : G/H_i \to G/K_{\hat f(i)}$ for $i\in I$. Note that $f_i$ is continuous.
 \end{definition}
The empty collection is the initial object of ${}_G\Sigma_\FC$. 
 \begin{rem}
 Let  ${}_G\mathfrak X_\FC$ be the category whose objects are the $G$-spaces $X$ with stabilisers in $\FC$ such that {each orbit $Gx\subseteq X$ is homeomorphic to the homogeneous space $G/G_x$ and let $X\cong\bigsqcup_{x\in[G\backslash X]} Gx\cong \bigsqcup_{x\in[G\backslash X]}G/G_x$. For example, for the family $\FO$ of all open subgroups of $G$, ${}_G\mathfrak X_\FO$ is the category of discrete $G$-spaces.} 
 
{The category ${}_G\Sigma_\FC$ is isomorphic to ${}_G\mathfrak X_\FC.$ On objects, this isomorphism ${}_G\Sigma_\FC \to {}_G\mathfrak X_\FC$ is given by  $(G/H_i)_{i\in I} \mapsto \bigsqcup_{i\in I} G/H_i$ and the inverse  by $X \mapsto (G/G_x)_{x\in[G\backslash X]}$. Now consider a morphism $f\colon (G/H_i)_{i\in I} \to (G/K_j )_{j\in J}$  in ${}_G\Sigma_\FC$ and define
$$\phi\colon \bigsqcup_{i\in I} G/H_i\to \bigsqcup_{j\in J} G/K_j $$
by $\phi(gH_i)=f_i(gH_i)\in G/K_{\hat f(i)}$. On the other hand, let $\phi\colon X\to Y$  be a $G$-map in ${}_G\mathfrak X_\FC$ and choose the set $[G\backslash Y]$ to contain the set $\{\phi(x)\mid x\in[G\backslash X]\}$. One then  maps $\phi$ to the following morphism
$$f\colon  (G/G_x)_{x\in[G\backslash X]}\to  (G/G_y)_{y\in[G\backslash Y]}$$ consisting of
\begin{itemize}
\item $\hat f\colon [G\backslash X]\to[G\backslash Y]$ mapping each $x\in[G\backslash X]$ to $\phi(x)$, and
\item $f_x\colon G/G_x\to G/G_{\phi(x)}$, for every $x\in[G\backslash X]$, mapping $gG_x$ to $gG_{\phi(x)}$.
\end{itemize}}

For $\FK$ the family of all compact subgroups of $G$, the category ${}_G\Sigma_{\FK}$ consists of all proper transitive $G$-spaces and their coproducts (cf.~\cite[Proposition~I.3.19(ii)-(iii)]{trans}). 
 \end{rem} 
 
 \begin{rem}\label{fact:spaces} As recalled above, every $G$-map between homogeneous $G$-spaces is continuous, and topological considerations become superfluous, in the sense that the orbit category $\OCG$ is isomorphic to the category whose objects are the transitive $G$-sets with stabilisers in $\FC$ and morphisms the $G$-maps among them. Consequently, ${}_G\Sigma_\FC$ is isomorphic to the category ${}_G\mathrm{Set}_\FC$ (whose objects are the $G$-sets with stabilisers in $\FC$) because they are obtained as free coproduct completions of isomorphic categories.
 \end{rem}

Since ${}_G\Sigma_\FC$ is often too large as category to work with, we define some subcategories that have been considered in other studies about Mackey functors as recalled in the introduction. For our purposes, the categories $\SCO$ and $\SCO^f$ will be most useful. For completeness, we will list the four obvious ones.

\begin{definition}\label{def:sco}
Let $(\FC,\FO)$ be a Mackey system for the t.d.l.c.~group $G$. We define the following categories.
\begin{itemize}[leftmargin=10pt]
\item $\SCO$ is the subcategory of ${}_G\Sigma_\FC$
 with the same objects and sets of morphisms
 $\Mor_{\SCO}(X,Y)=\{f\in\Mor_{{}_G\Sigma_\FC}(X,Y)\mid \forall x\in X: G_x\in\FO(G_{f(x)})\}$, see \cite{tw-structure}.
 \item $\SCO^{af}$ is the full subcategory of $\SCO$ with objects the $G$-spaces $X$ such that $X^U$ has finitely many $N_G(U)/U$-orbits for all $U\in\FC$, see~\cite{ds}.
 \item $\SCO^r$ is  the subcategory of $\SCO$ with the same objects as $\SCO$ and, see~\cite{BB}, $$\Mor_{\SCO^r}(X,Y)=\{f\in \Mor_{\SCO}(X,Y)\mid \text{$f^{-1}(y)$ has finitely many $G_y$-orbits}\}.$$
 \item $\SCO^f$ is the full subcategory of $\SCO$ with objects the $G$-spaces with finitely many orbits, see \cite{BB, dress}.
 \end{itemize}
\end{definition}

The superscripts are indicative of the context in which these categories of $G$-spaces have been used in the study of Mackey functors.

Note that $\SCO$ is generally not small, but it has small hom-sets. Hence, Yoneda's lemma applies~\cite[\S~III.2]{maclane}.
By Remark~\ref{fact:spaces}, $\SCO$ can be regarded as the category with objects the $G$-sets with stabilisers in $\FC$, and with morphisms the $G$-maps $f\colon X\to Y$ such that $G_x\in\FO(G_{f(x)})$ for every $x\in X$.  This category has products, coproducts and pull-backs, which have the same construction as in ${}_G\mathrm{Set}_\FC,$. On the other hand $\SCO^f$ is a small category because its objects are coproducts of finitely many homogeneous $G$-spaces with stabilisers in $\FC$. Note that $\SCO^f$ also admits  pull-backs. 
Indeed, for any pair of morphisms of homogeneous $G$-spaces $g:G/L\to G/K$ and $g':G/S\to G/K$ in $\SCO$, where $L^g,S^{g'}\leq K$, the pull-back construction yields
$$\xymatrix{\bigsqcup_{x\in L^g\backslash K/S^{g'}} G/L^g\cap S^{g'x^{-1}}\ar[r]\ar[d]&G/S\ar[d]^{g'}\\G/L\ar[r]_g&G/K}$$
Note that $|L^g\backslash K/S^{g'}|$ is finite, by Definition~\ref{def:Mackey sys}(i). 

\begin{rem}
Given the Mackey system $(\FCO)$ defined by the family of compact open subgroups of $G$, observe that the categories  ${}_G\mathcal{S}_{(\FCO)}$ and ${}_G\Sigma_\FCO$ coincide. Indeed, given a $G$-map $f\colon X\to Y$ and $x\in X$, then $G_x= G_x\cap G_{f(x)}$ is open in  $G_{f(x)}$ since $G_x$ is open in $G$. We denote these categories simply by ${}_G\mathcal S_\FCO$. 
\end{rem}
\begin{example}
Let us give some examples which show that the above categories of $G$-spaces are genuinely distinct.
Let $G$ be an infinite profinite group. Let $\FC$ be the family of all closed subgroups of $G$ and, for all $H\in\FC$, let $\FO(H)$ be the family of all open subgroups of $H$.
Then, we have strict inclusions
$$\SCOO^f\subset\SCOO^r\subset\SCOO^{af}\subset\SCOO\subset\SCO.$$
The inclusions of the above categories are immediate from the definitions. In particular, $\SCOO^f$ is the category of finite $G$-sets (since $G$ is compact). 
Writing $\Ob(\CS)$ and $\Mor(\CS)$ for the objects and the morphisms of a given category $\CS$, we have:
\begin{itemize}[leftmargin=10pt]
\item $G/1\in\Ob(\SCO)$, but $G/1\notin\Ob(\SCOO)$. 
\item $X=\dst\bigsqcup_{n\in\N}G/G\in\Ob(\SCOO)$ but $\dst\bigsqcup_{n\in\N}G/G\notin\Ob(\SCOO^{af})$, since $|X^U|$ is infinite (countable) for any open subgroup $U$ of $G$ and the Weyl group $N_G(U)/U$ is finite (since $U$ has finite index in $G$).
\item The morphism $\pi:\dst\bigsqcup_{N\trianglelefteq_oG}G/N\to G/G\in\Mor(\SCOO^{af})$ but $\pi\notin\Mor(\SCOO^r)$, since there is a unique fibre and it does not have finitely many $G$-orbits.
\item The object $X=\dst\bigsqcup_{N\trianglelefteq_oG}G/N$ is in $\Ob(\SCOO^r)$ but not in $\Ob(\SCOO^f)$. \end{itemize}
\end{example}


\subsection{Modules over a category}\label{cat}
We now collect a few facts about modules over a category in a general set-up, following the approach by St.~John-Green~\cite{stjg}.
Throughout this section, we let $R$ be a commutative ring with unit.
\begin{definition}
Let $\D$ be a small abelian category. We say that $\D$ satisfies condition $(A)$ if, for any two objects $x,y \in \D$, the set of morphisms, denoted $[x,y]_{\D}$, is a free abelian group.
\end{definition}
For any small abelian category $\D$ with $(A)$ we define the category of  $\D$-modules over $R$, denoted $\DMod_R$ as the category of contravariant functors 
$$\D \to \RMod,$$
where $\RMod$ denotes the category of left $R$-modules. The morphisms in $\DMod_R$ are the natural transformations.
Since $\RMod$ is an abelian category and $\D$ is a small abelian category, the category of contravariant $\D$-modules inherits many properties from $\RMod$. Most notably, we have that limits and colimits exist and that products and coproducts of exact sequences are exact. Furthermore,  a sequence
$$A(-) \to B(-) \to C(-)$$ 
of $\D$-modules is exact at $B(-)$ if and only if, for every $x \in \D$, the  sequence
$$A(x) \to  B(x) \to C(x)$$
 of $R$-modules is exact at $B(x).$  For each $y \in \D$ one defines the contravariant functor $R[-,y]_{\D}$ as follows:
$$R[-,y]_{\D}(x)= R \otimes_{\Z} [x,y]_{\D}.$$
These $\D$-modules are the building blocks for the free $\D$-modules over $R$ and are defined as adjoints to some forgetful functor. See~\cite{stjg} for the details.
If $R$ is unambiguous from the context, we simply write $\D$-modules instead of $\D$-modules over $R$. Hence free $\D$-modules are direct sums of modules of the form $R[-,y]_{\D},$ and projective modules are direct summands of free $\D$-modules. The following result is an adaptation of Yoneda's Lemma and a standard result in the case of the orbit category, see for example \cite[p.9]{mislin}.

\begin{prop}\cite[Lemma 2.2]{stjg} Let $M(-) \in \Ob(\DMod)$ and $x \in \Ob(\D)$. Then there is a natural isomorphism
$$\begin{array}{ccc}
\Hom_{\DMod}(R[-,x]_{\D},M(-)) & \cong & M(x) \\
\eta& \mapsto & \eta_x(id_x).
\end{array}$$
In particular, the category of contravariant $\D$-modules has enough projectives.
\end{prop}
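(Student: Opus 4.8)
The plan is to prove a Yoneda-type isomorphism and then deduce the existence of enough projectives as a formal consequence. The statement is the standard Yoneda lemma adapted to the setting of contravariant $\D$-modules, where the representable functors $R[-,x]_{\D}$ play the role of free generators. Since the full proof is attributed to \cite[Lemma 2.2]{stjg}, my goal is to sketch the argument that verifies the natural isomorphism and then argue formally that enough projectives follow.

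First I would construct the map and its inverse explicitly. Given a natural transformation $\eta\colon R[-,x]_{\D}\to M(-)$, the assignment $\eta\mapsto \eta_x(\id_x)$ makes sense because $R[x,x]_{\D}=R\otimes_{\Z}[x,x]_{\D}$ contains the element $1\otimes \id_x$, which I abbreviate $\id_x$, and $\eta_x$ maps it into $M(x)$. Conversely, given $m\in M(x)$, I would define a natural transformation $\theta^m$ by setting, for each object $y\in\D$ and each morphism $\alpha\in[y,x]_{\D}$, the value $\theta^m_y(1\otimes\alpha)=M(\alpha)(m)$, extended $R$-linearly over the free $R$-module $R[y,x]_{\D}$. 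The key step is then to check that $\theta^m$ is genuinely natural: for a morphism $\beta\colon z\to y$ in $\D$, the induced map $R[\beta,x]_{\D}$ sends $1\otimes\alpha$ to $1\otimes(\alpha\circ\beta)$, and contravariant functoriality of $M$ gives $M(\alpha\circ\beta)(m)=M(\beta)\bigl(M(\alpha)(m)\bigr)$, which is exactly the naturality square. That the two constructions are mutually inverse is then a direct computation: starting from $\eta$ and using its own naturality applied to the morphism $\alpha\colon y\to x$ recovers $\eta_y(1\otimes\alpha)$ from $\eta_x(\id_x)$, so $\theta^{\eta_x(\id_x)}=\eta$; the other composite is immediate from the definition.

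Once the isomorphism is established, I would deduce naturality in both variables and, crucially for the application, note that the isomorphism shows $R[-,x]_{\D}$ is a projective object: the functor $\Hom_{\DMod}(R[-,x]_{\D},-)\cong(-)(x)$ is exact, because evaluation at $x$ is exact by the pointwise characterization of exactness for $\D$-modules recalled just before the statement. To see that the category has enough projectives, I would take an arbitrary $\D$-module $M(-)$ and build an epimorphism from a free module onto it. For each object $x\in\D$ and each element $m\in M(x)$ the isomorphism produces a morphism $R[-,x]_{\D}\to M(-)$ hitting $m$; taking the coproduct $\bigoplus_{x,m}R[-,x]_{\D}$ over all such pairs yields a free $\D$-module that surjects onto $M(-)$, since surjectivity can be checked objectwise and every element of every $M(x)$ lies in the image.

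\emph{The main obstacle} I anticipate is purely bookkeeping rather than conceptual: one must keep the variance straight throughout. Because $\D$-modules are \emph{contravariant} functors, the representable functor is $R[-,x]_{\D}$ and the Yoneda element lives in $R[x,x]_{\D}$, so every naturality computation involves precomposition; a sign or direction error here would reverse the roles of domain and codomain. The only genuinely new feature compared to the classical Yoneda lemma is the tensor factor $R\otimes_{\Z}(-)$ in the definition of $R[-,x]_{\D}$, which forces the condition $(A)$ that $[x,y]_{\D}$ be a free abelian group; I would use this to guarantee that $R[y,x]_{\D}$ is a free $R$-module on the basis $[y,x]_{\D}$, so that $R$-linear extension of $\theta^m$ from the basis elements $1\otimes\alpha$ is well-defined and unambiguous. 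With freeness in hand, every step reduces to the classical argument.
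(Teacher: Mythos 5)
The paper offers no proof of its own here---it simply cites \cite[Lemma~2.2]{stjg}---and your argument is the standard additive Yoneda lemma that the cited source carries out, so the approach is the expected one and the computation is correct: the two assignments $\eta\mapsto\eta_x(\id_x)$ and $m\mapsto\theta^m$ are mutually inverse exactly as you check, and the deduction of enough projectives from exactness of evaluation plus the objectwise surjection $\bigoplus_{x,m}R[-,x]_{\D}\twoheadrightarrow M(-)$ is complete. The one point worth tightening is your handling of condition $(A)$: since $[y,x]_{\D}$ is a free abelian group, its elements (the morphisms) are $\Z$-linear combinations of some chosen basis, and precomposition by $\beta$ need not carry basis elements to basis elements; so for the formula $\theta^m_y(1\otimes\alpha)=M(\alpha)(m)$ to hold for \emph{all} $\alpha$ (as naturality requires) and not merely for basis elements, you are implicitly using that $M$ is an additive functor, which is the intended (if unstated) convention for $\D$-modules in this setting and is satisfied in the application to $\MCOG$.
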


Hence, we can do cohomology in the standard manner. In particular, let $M(-)$ and $N(-)$ be $\D$-modules and $P_{\ast}(-) \epi M(-)$
be a projective resolution of $M(-).$ We define the Ext-groups as 
$$\Ext^k_{\D}(M(-),N(-)) := \Hn^k \Hom_{\DMod}(P_{\ast}(-), N(-)),$$
and the projective dimension of a $\D$-module as:
$$\pd_{\D} M(-) = \sup\{k \,|\, \Ext^k_{\D}(M(-), ?) \neq 0\}.$$
This is equivalent to $\pd_{\D} M(-)$ being the shortest length of a projective resolution of $M(-).$
\begin{example}[\bf Mackey functors] \label{ex:mackey-A} Let $\Msys$ be a Mackey system. The category of $G$-Mackey spaces $\MCOG$ with objects in $\SCO^{f}$, as in Definition~\ref{def:Mackey cat-lindner} below, satisfies condition~(A) (cf.~Remark\ref{rem:freeab}).
\end{example}

\begin{example}[\bf Bredon modules] \label{ex:Bredon-A}  Analogously to the discrete case, see~\cite{trans, lueckbook}  one can define Bredon modules for t.d.l.c.~groups in a straightforward manner as modules over the orbit category $\O_\FC G$. The category of {\em Bredon modules} $\OCG\,\mbox{-}\Mod$ is the category whose objects are the contravariant functors
$M(-): \OCG \to \Ab$, and the morphisms are natural transformations. 
This is an abelian category with enough projectives. The projective building blocks of any free module are the (contravariant) functors
$$P_K(-) = \Z [- , G/K]_{\OCG},$$
where $K \in \FC$. That is, for all $H \in \FC$, the evaluation $P_K(G/H) = \Z [G/H,G/K]_{\OCG}$ is the free abelian group on the set $[G/H,G/K]_{\OCG}.$ Note that
$$P_K(G/H) \cong \Z((G/K)^H)$$
is an abstract permutation $N_G(H)/H$-module. In particular, $P_H(G/H)\cong\Z N_G(H)/H$ and $P_K(G/H)$ is zero if $K$ does not contain any conjugate of $H$.

Following~\cite[Remark~2.0.7]{stjg} one can show that the $\OCG$-module category is isomorphic to the $\OCGp$-module category, where $\OCGp$ has the homogeneous $G$-spaces with stabilisers in $\FC$ as objects and the morphism set between two such $G$-spaces is the free abelian group generated by the $G$-maps between them. This category satisfies condition $(A)$.
\end{example}


\section{Mackey functors for t.d.l.c.~groups}\label{sec:fcg} 

We will define a Mackey functor using the categorical approach used by Lindner, see, for example~\cite{tw-structure, mp-n} as this is the most useful viewpoint for the present purpose. 
Our results in the sequel require a mild extra assumption on the Mackey systems considered, namely that the family $\FC$ be large enough in the following sense. 

\begin{definition}\label{def:Mackey cat-lindner} Let $(\FC,\FO)$ be a Mackey system of $G$ such that $\FCO\subseteq\FC$. We define $\MCOG$ to be the category of $G$-Mackey spaces as follows:
\begin{itemize}
\item The objects are the objects of $\SCO^{f}$.
\item The morphism sets between two objects $X,Y$, denoted $[X,Y]_{\MCOG}$, is the Grothendieck group of the commutative (additive) monoid consisting of all equivalence classes of diagrams of the form
$$\xymatrix{X&Z\ar[l]_\alpha\ar[r]^\beta&Y},$$
where $\alpha$ and $\beta$ are morphisms in $\SCO^f$.  
\end{itemize}
\end{definition}
Recall that two diagrams are equivalent, say $$\xymatrix{X&Z\ar[l]_\alpha\ar[r]^\beta&Y}\sim\xymatrix{X&Z'\ar[l]_{\alpha'}\ar[r]^{\beta'}&Y},$$
if there exists {an isomorphism} $\gamma\in\Mor_{\SCO^f}(Z,Z')$ such that the following diagram commutes:
$$\xymatrix{&Z\ar[dl]_\alpha\ar[dd]^\gamma\ar[dr]^\beta\\X&&Y\\&Z'\ar[ul]_{\alpha'}\ar[ur]^{\beta'}}.$$

The addition of two morphisms from $X$ to $Y$ is given by their disjoint union, i.e.,
$$[\xymatrix{X&Z\ar[l]_\alpha\ar[r]^\beta&Y}]+[\xymatrix{X&Z'\ar[l]_{\alpha'}\ar[r]^{\beta'}&Y}]=[\xymatrix{X&(Z\sqcup Z')\ar[l]_<<<{\alpha\sqcup\alpha'}\ar[r]^<<<{\beta\sqcup\beta'}&Y}]$$
and the zero element is
$[\xymatrix{X&\emptyset\ar[l]\ar[r]&Y}]$, where the arrows denote the unique maps from the initial object $\emptyset$ to $X$ and $Y$, respectively. 

Morphisms in $\MCOG$ can be composed (right-to-left) using pull-backs:
$$[\xymatrix{Y&V\ar[l]_\gamma\ar[r]^\delta&Z}]\circ[\xymatrix{X&U\ar[l]_\alpha\ar[r]^\beta&Y}]=[\xymatrix{X&W\ar[l]_{\alpha\tilde{\gamma}}\ar[r]^{\delta\tilde{\beta}}&Z}]$$
where $W,\tilde{\gamma}$ and $\tilde{\beta}$ are given by the pull-back in $\SCO^f$ 
$$\xymatrix{&&W\ar@{.>}[dl]_{\tilde{\gamma}}\ar@{.>}[dr]^{\tilde{\beta}}\\&U\ar[dl]_\alpha\ar[dr]^\beta&&V\ar[dl]_\gamma\ar[dr]^\delta\\X&&Y&&Z}.$$
\begin{rem}\label{rem:freeab}
Since $G$-Mackey spaces have finitely many $G$-orbits by definition, every group $[X,Y]_{\MCOG}$ is free abelian with basis consisting of morphisms, often called {\em basic}, of the form 
$$[\xymatrix{G/H&G/L\ar[l]_g\ar[r]^h&G/K}],$$ where $L\mapsto gH$ and $L\mapsto hK$ for some $g,h\in G$ such that $L^g\leq H$ and $L^h\leq K$ (see \cite[Proposition~2.2]{tw-structure}).
\end{rem}

\begin{rem} 
 One can of course define Mackey functors for t.d.l.c. groups using the approaches due to Dress~\cite{dress} and Green~\cite{green}, see also~\cite{BB, nakaoka, tw-structure}. 
 In many cases, the three definitions have been shown to be equivalent. It is a routine exercise to observe that this equivalence also holds in the present context.
\end{rem}

\begin{definition}\label{def:macmod}
We define $\MCOG\,\mbox{-}\Mod$ to be the category of contravariant functors from $\MCOG$ to ${\mathcal Ab}$. 
\end{definition}
The preadditive category $\MCOG$ is small and satisfies condition $(A)$ (see Remark~\ref{rem:freeab} and~Section~\ref{cat}). Hence, $\MCOG\,\mbox{-}\Mod$ is a Grothendieck category. As ${\mathcal A}b$ is abelian, all the submodules and the quotient modules, as well as exactness of sequences in $\MCOG$, is defined object-wise as recalled above. 
It turns out that every finitely generated free module is a coproduct of contravariant functors of the form $[-,X]_{\MCOG}$ for objects $X$ in $\MCOG$.
That is, the set $\{[-,X]_{\MCOG}\mid X\in\MCOG\}$ is a family of finitely generated projective generators in $\MCOG\,\mbox{-}\Mod$ (cf.~Section~\ref{cat})

\begin{definition}\label{def:lindner-mf}
An object of the category  $\MCOG\,\mbox{-}\Mod$  that preserves products is said to be a {\em Mackey functor} for $(\FC,\FO).$ 
\end{definition}

\subsection{The Burnside Mackey functor}
From \cite[\S8]{tw-structure}, the Burnside Mackey functor is important when studying the structure of Mackey functors. In our context, we distinguish two situations, depending on whether or not $G\in\FC$.
If $G\in\FC$, then $G/G=\bullet$ is the terminal object in $\SCOF$, and we can consider the Mackey functor $[-,\bullet]_{\MCOG}: \MCOG \to {\Ab}$ in $\MCOG\,\mbox{-}\Mod$.

If $G \not\in \FC$, then $\SCOF$ has no terminal object. 
In this case, we add a terminal object $\bullet$ to obtain a larger category, say $\SCOO^{f,\bullet}$, and we add morphisms of the form $X\stackrel{\alpha}{\leftarrow} Z \to\bullet$ with $\alpha$ in $\SCOF$ to define free abelian groups $[X, \bullet]_{\SCOO^{f,\bullet}}$, and hence a Mackey functor $[-,\bullet]_{\MCOG}: \MCOG \to {\Ab}$ (with the abuse of notation of using $\MCOG$ for this larger category as well).

In both cases, we write $[-,\bullet]_{\MCOG} = \underbar B^G_{(\FC,\FO)}(-)$ and refer to it as the  {\em Burnside Mackey functor} associated to the Mackey system $(\FC,\FO)$. 
If there is no confusion about the t.d.l.c.~group and the Mackey system, we write $\underbar B(-)$ instead of $\underbar B^G_{(\FC,\FO)}(-)$.

\begin{prop}\label{prop:bproj} The Burnside Mackey functor $\underbar{B}(-)$ is projective in $\MCOG\,\mbox{-}\Mod$ if and only if $G\in\FC$.
\end{prop}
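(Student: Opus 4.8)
The plan is to analyze the two directions of the biconditional separately, using the Yoneda-type description of projectives.

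For the direction $G \in \FC \Rightarrow \underbar B(-)$ projective, I would argue that when $G \in \FC$ the homogeneous space $G/G = \bullet$ is the terminal object of $\SCOF$, hence an actual object of $\MCOG$. The Burnside Mackey functor is by construction $[-,\bullet]_{\MCOG}$, and by the Yoneda-type result (\cite[Lemma 2.2]{stjg}, reproduced above) the representable functors $[-,X]_{\MCOG}$ are precisely the finitely generated projective generators of $\MCOG\,\mbox{-}\Mod$. Since $\bullet \in \Ob(\MCOG)$ in this case, $\underbar B(-) = [-,\bullet]_{\MCOG}$ is representable and therefore projective. This direction is essentially immediate from the setup.

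The substantive direction is the contrapositive: if $G \notin \FC$, then $\underbar B(-)$ is \emph{not} projective. Here $\bullet$ is only a formally adjoined terminal object, so $\underbar B(-)$ is not representable by an object of $\MCOG$, and I must show it fails to be a retract of any free module. My plan is to examine the evaluation $\underbar B(G/H) = [G/H,\bullet]_{\MCOG}$ for objects $G/H$ with $H \in \FCO$, and to compare with the evaluations of representables $[G/H, X]_{\MCOG}$ for genuine objects $X$. The key structural input is Remark~\ref{rem:freeab}: morphisms in $\MCOG$ have a free abelian basis of basic spans $[\xymatrix{G/H & G/L \ar[l]_g \ar[r]^h & G/K}]$. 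A natural strategy is to suppose $\underbar B(-)$ were projective, hence a direct summand of a free module $F(-) = \bigoplus_i [-,X_i]_{\MCOG}$, and derive a contradiction by testing the splitting map $\underbar B(-) \hookrightarrow F(-)$ on the identity span at $\bullet$. The obstruction should be that the terminal object $\bullet$ cannot be ``approximated from inside'' $\MCOG$: because $G \notin \FC$, every object of $\SCOF$ has all orbits of the form $G/U$ with $U$ a \emph{proper} subgroup in $\FC$ (in particular, in the $\FCO$ situation, compact open and hence of infinite index in the non-discrete $G$), so there is no object mapping onto $\bullet$ by an isomorphism, and the identity element $\id_\bullet \in \underbar B(\bullet)$ cannot be hit through a finite free module.

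I expect the main obstacle to be making this last non-representability argument precise and robust across both the $G \in \FC$ dichotomy cases and the subtlety that $\FCO \subseteq \FC$ but $G$ may be non-discrete. Concretely, the difficulty is to show that no finite direct sum of representables $[-,X_i]$ admits a natural retraction onto $\underbar B(-)$; the cleanest route is probably to evaluate at $\bullet$ itself (in the enlarged category) and observe that $\underbar B(\bullet) = [\bullet,\bullet]$ contains the class of the identity span, which must factor through the retraction, forcing some $X_i$ to carry a span from $\bullet$ that splits the identity—and then to show, using that each $X_i$ has finitely many orbits with proper stabilisers, that such a splitting span cannot exist because the backward leg $\bullet \leftarrow Z$ of any span into $\bullet$ forces $Z$ to surject onto $\bullet$, which is incompatible with composing back to $\id_\bullet$ unless $G \in \FC$. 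Formalising ``$Z$ surjects onto $\bullet$ forces an orbit $G/G$'' is the crux, and I would lean on the explicit pull-back composition formula displayed above to track how the identity decomposes.
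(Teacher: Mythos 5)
Your ``if'' direction is fine and coincides with the paper's (one-line) argument: when $G\in\FC$ the space $\bullet=G/G$ is an object of $\SCOF$, so $\underbar B(-)=[-,\bullet]_{\MCOG}$ is representable, hence free of rank one and projective by the Yoneda-type lemma. The gap is in the ``only if'' direction. A hypothetical splitting $\underbar B(-)\hookrightarrow F(-)\twoheadrightarrow\underbar B(-)$ lives in $\MCOG\,\mbox{-}\Mod$: it is a pair of natural transformations defined only on the objects of $\SCOF$, and when $G\notin\FC$ the formal terminal object $\bullet$ is \emph{not} among these. The enlarged category adjoins only the hom-groups $[X,\bullet]$ (spans $X\leftarrow Z\to\bullet$ with $Z$ in $\SCOF$); it defines neither $[\bullet,X]$ nor $[\bullet,\bullet]$, and in particular $\id_\bullet=[\bullet\leftarrow\bullet\to\bullet]$ is not even a morphism there, since $\bullet\notin\Ob(\SCOF)$. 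So ``testing the splitting on $\id_\bullet\in\underbar B(\bullet)$'' is not an available move. Moreover, the step you yourself identify as the crux rests on a false implication: \emph{every} nonempty $G$-space maps onto the one-point space $\bullet$, so surjectivity onto $\bullet$ does not force an orbit of type $G/G$. What would actually matter is that a composite of spans through an object of $\SCOF$ is computed by a pull-back whose stabilisers again lie in $\FC$, so the basis element $\id_\bullet$ of a free abelian group of spans indexed by conjugacy classes in $\FC$ could never appear --- but that computation takes place in hom-groups your module category does not contain, so it cannot be the proof.

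A workable argument for the converse goes through evaluation at genuine objects rather than at $\bullet$. Since $\FCO\subseteq\FC$, $G\notin\FC$ forces $G$ to be non-compact. If $\underbar B(-)$ were a direct summand of a free module $\bigoplus_{i}[-,G/K_i]_{\MCOG}$ with $K_i\in\FC$ compact, one evaluates the splitting at the trivial subgroup via the colimit construction of Proposition~\ref{prop-mackey-ev-comp} (for discrete $G$, simply at $G/1$), obtaining a splitting of $\Z\cong\underbar B(G/1)$ off $\bigoplus_i\Z[G/K_i]$ compatibly with the $G$-action coming from $\Z[G]\cong\Z[G/1,G/1]$. The image of $1$ must lie in $\bigl(\bigoplus_i\Z[G/K_i]\bigr)^G$, which vanishes because every orbit $G/K_i$ is infinite; equivalently, tensoring with $\Q$ would make $\Q$ projective in $\Q[G]\,\mbox{-}\operatorname{Dis}$, contradicting Remark~\ref{rem:rdcd}(1). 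This is the content hidden behind the paper's very terse proof, which as written really only substantiates the ``if'' direction; your write-up should supply this (or an equivalent) argument rather than the evaluation at $\bullet$.
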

\begin{proof} This follows from  Yoneda's Lemma and that $[-,\bullet]_{\MCOG}$ is free, see Subsection~\ref{cat}.
\end{proof} 
\begin{rem}\label{r:burnside}
For every $H\in\FC$, we have
$$\underbar B(G/H)\cong \bigoplus_{L\in \O(H)\atop \text{up to $H$-conjugacy}}\Z_L .$$
Indeed, $\underbar B(G/H)$ is the free abelian group with basis given by the basic morphisms $[\xymatrix{G/H&G/L\ar[r]\ar[l]&\bullet}]$ with $L\in \O(H)$ (up to $H$-conjugacy).
\end{rem}
\section{Cohomological dimensions}
In this section we investigate and compare various cohomological dimensions of the functors defined above.
We start with reviewing some basic facts of the rational discrete cohomology of Castellano and Weigel~\cite{cw16}.
We then proceed to prove our main results and raise a few open questions.

\subsection{Rational discrete cohomological dimension}
Let $G$ be a t.d.l.c.~group and let $\Q[G]\,\mbox{-}\operatorname{Dis}$ denote the category of {\em discrete $\Q[G]$-modules}~\cite{cw16}. Namely, $\Q[G]\,\mbox{-}\operatorname{Dis}$ is the full subcategory of $\Q[G]\,\mbox{-}\Mod$ whose objects are the (left) $\mathbb Q[G]$-modules $M$ such that $G\times M\to M$ is continuous if $M$ carries the discrete topology. For example, given  an open subgroup $U\leq G$, the permutation $\Q[G]$-module $\Q[G/U]$ belongs to ${}_{\mathbb Q[G]}\,\mbox{-}\operatorname{Dis}$. By \cite[Proposition~3.2]{cw16}, $\Q[G]\,\mbox{-}\operatorname{Dis}$ is an abelian category with enough projectives and injectives.  
In particular, a discrete \mbox{$\mathbb Q[G]$-module} $P$ is projective if and only if, for some index set $I$, $P$ is a direct summand of a discrete permutation $\mathbb Q[G]$-module of the form
\begin{equation}\label{eq:perm}
\bigoplus_{i\in I}\mathbb Q[G/U_i],
\end{equation}
where $U_i$ is a compact open subgroup of $G$ for all $i\in I$. Every $\mathbb Q[G]$-module of the form \eqref{eq:perm} is called {\em proper discrete permutation $\mathbb Q[G]$-module}.
The notion of projective dimension in $\mathbb Q[G]\,\mbox{-}\operatorname{Dis}$ is defined in the usual way, and the {\em rational discrete cohomological dimension} of $G$ 
\begin{equation}\label{eqn:cd-dis}\cd_{\mathbb Q}G:=\mathrm{pd}_{\mathbb Q[G]\,\mbox{-}\operatorname{Dis}}(\mathbb Q).\end{equation}
is the projective dimension of the trivial discrete $\mathbb Q[G]$-module $\Q$.
From the definition, we record two immediate consequences.
\begin{rem}[\cite{cw16}]\label{rem:rdcd} Let $G$ be a t.d.l.c. group. The following hold:
\begin{enumerate}
\item $\cd_{\mathbb Q}G = 0$ if and only if $G$ is compact.
\item If $G$ is discrete, then $\cd_{\mathbb Q}G$ is the classical rational cohomological dimension.
\end{enumerate}
\end{rem}
\subsection{Cohomological dimension of the Burnside Mackey functor}
Throughout this section, we let $G$ be a t.d.l.c. group, and we consider Mackey systems $\Msys$ with $\FC$ a family of compact subgroups containing $\FCO.$ 
By Remark~\ref{fact:spaces}, we can consider the objects in $\SCO^{f}$ to be abstract $G$-sets instead of $G$-spaces, which simplifies the comparison with Bredon cohomology.

Since by definition $\MCOG$ is a small abelian category with $(A),$ we can apply the results and facts of Section~\ref{cat} to $\MCOG\,\mbox{-}\Mod$. For an arbitrary module $M(-) \in \MCOG\,\mbox{-}\Mod$ we define
\begin{equation}
\Hn^*_{\MCO}(G; M(-)) := \Ext^*_{\MCOG}(\underbar B(-), M(-)).
\end{equation}
Hence, we define the {\em Mackey cohomological dimension} of $G$ to be the projective dimension of the Burnside Mackey functor
\begin{equation}
\cd_{\MCO} G := \pd_{\MCOG} \underbar{B}(-).
\end{equation}

\begin{lem} $\cd_{\MCO}G = 0$ if and only if $G$ is compact.
\end{lem}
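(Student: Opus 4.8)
The statement $\cd_{\MCO}G = 0 \iff G$ compact asks us to characterise when the Burnside Mackey functor $\underline B(-)$ has projective dimension zero, i.e.~when it is itself projective. The plan is to analyse the two implications separately, using the structural results already established: Proposition~\ref{prop:bproj} (which says $\underline B(-)$ is projective iff $G\in\FC$), the explicit evaluation in Remark~\ref{r:burnside}, and the rational comparison that the paper's main theorem records, namely $\cd_{\mathbb Q}G \le \cd_{\MCOO}G$ together with Remark~\ref{rem:rdcd}(1) stating $\cd_{\mathbb Q}G = 0$ iff $G$ is compact.

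\emph{The direction $G$ compact $\implies \cd_{\MCO}G = 0$.} If $G$ is compact, then since $\FCO\subseteq\FC$ and $G$ is a compact open subgroup of itself, we have $G\in\FCO\subseteq\FC$. Hence the homogeneous space $G/G=\bullet$ is the terminal object already present in $\SCO^f$, and by Proposition~\ref{prop:bproj} the Burnside functor $\underline B(-)=[-,\bullet]_{\MCOG}$ is a representable, hence free, hence projective object of $\MCOG\,\mbox{-}\Mod$. Therefore $\pd_{\MCOG}\underline B(-)=0$, i.e.~$\cd_{\MCO}G=0$.

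\emph{The direction $\cd_{\MCO}G = 0 \implies G$ compact.} Here I would argue by the rational lower bound supplied by the inequality $\cd_{\mathbb Q}G \le \cd_{\MCOO}G$: if the Mackey cohomological dimension vanishes then so does $\cd_{\mathbb Q}G$, and by Remark~\ref{rem:rdcd}(1) this forces $G$ to be compact. The one caveat is that the displayed inequality is stated for the specific Mackey system $\FCO$, whereas the lemma is phrased for a general $\Msys$ with $\FC$ a family of compact subgroups containing $\FCO$; I would either invoke the inequality in the form proved later for the chosen $\FC$, or give a direct argument. The direct route is to suppose $G$ is \emph{not} compact and produce a nonzero higher $\Ext$-group for $\underline B(-)$, or equivalently show $\underline B(-)$ is not projective. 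Since $G$ noncompact means $G\notin\FC$ (as $\FC$ consists of compact subgroups), Proposition~\ref{prop:bproj} immediately gives that $\underline B(-)$ is not projective, so $\cd_{\MCO}G\ge 1>0$, a contradiction. This last observation in fact gives the cleanest proof and sidesteps the rational comparison entirely.

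\emph{Main obstacle.} The genuinely delicate point is the equivalence ``$G$ compact $\iff G\in\FC$'' used to dovetail Proposition~\ref{prop:bproj} with the topological hypothesis. Since $\FC$ is assumed to be a family of \emph{compact} subgroups containing $\FCO$, membership $G\in\FC$ indeed forces $G$ compact, and conversely a compact $G$ lies in $\FCO\subseteq\FC$; I would state this equivalence explicitly as the hinge of the argument. Once it is in place, both implications reduce to Proposition~\ref{prop:bproj}: $\cd_{\MCO}G=0$ means $\underline B(-)$ is projective, which by that proposition is equivalent to $G\in\FC$, which by compactness of the family is equivalent to $G$ being compact. The main thing to verify carefully is that no subtlety arises when $G\notin\FC$ and $\bullet$ is adjoined artificially --- one must confirm that the adjoined terminal object does not accidentally make $\underline B(-)$ representable, but this is exactly the content of the ``only if'' part of Proposition~\ref{prop:bproj}.
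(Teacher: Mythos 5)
Your proposal is correct and matches the paper's argument: the paper also deduces the lemma from Proposition~\ref{prop:bproj}, noting that $\cd_{\MCO}G=0$ holds precisely when $G/G$ is an object of $\SCO^{f}$, i.e.\ $G\in\FC$, which for a family of compact subgroups containing $\FCO$ is equivalent to $G$ being compact. The detour through the rational comparison is unnecessary, as you yourself conclude.
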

\begin{proof} This follows from the fact that $\cd_{\MCO}G = 0 $ if and only if $G/G \in\Ob(\SCO^{f})$, see Proposition~\ref{prop:bproj}.
\end{proof}

We will now show that we can evaluate projective resolutions of the Burnside Mackey functor in $\MCOG\,\mbox{-}\Mod$ at every compact subgroup $K$ of $G.$ 
This is an adaptation of a similar construction for Bredon functors communicated to us by Ged Corob Cook~\cite{cc}.
First note that by~\cite[Lemma 3.5]{luecksurvey} it follows that every compact subgroup $K\in\FK$ is contained in a compact open subgroup of~$G$. Now apply \cite[2.1.4 (d)]{RZ} to conclude that 
$$K = \bigcap_{K\leq U \in \FCO} U.$$

Let $U$ and $V$ in $\FCO$ containing $K$ and let $G/V \to G/U$ be a morphism in $\SCO^{f}$. For every $L\in\FCO$, we obtain a group homomorphism
$$ [G/U,G/L]_{\MCOOG}\to [G/V,G/L]_{\MCOOG}$$ in the usual way. Explicitly, we map a basic morphism $[\xymatrix{G/U&G/H\ar[r]\ar[l]&G/L}]$ to the morphism $[\xymatrix{G/V&X\ar[r]\ar[l]&G/L}]$ defined by the pull-back
$$\xymatrix{
G/V\ar[d]&X\ar@{-->}[l]\ar@{-->}[d]&\\
G/U&G/H\ar[l]\ar[r]&G/L}.$$
Hence, taking colimits, we define:
$$[G/K,G/L]_{\MCOOG}: = \colim_{K \leq U \in \FCO} [G/U,G/L]_{\MCOOG}.$$
Since left adjoint functors preserve colimits, we have
$$\Z[G/K,G/L]_{\MCOOG}\cong\colim_{K\leq U \in \FCO} \Z[G/U,G/L]_{\MCOOG}.$$
The properties of the Burnside functor (see \cite[Section 5]{ds}) allow us to define analogously
$$\underbar{B}(G/K): = \colim_{K\leq U \in \FCO} \underbar{B}(G/U).$$

\begin{prop}\label{prop-mackey-ev-comp} Let $F_*(-) \to \underbar{B}(-) \to 0$ be a free resolution of the Burnside Mackey functor over $\MCOOG$ and let $K$ be a compact subgroup of $G.$ 
Then, evaluation at $K$ yields an exact sequence $F_*(G/K) \to \underbar{B}(G/K) \to 0$ of abelian groups.
\end{prop}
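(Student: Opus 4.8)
The plan is to exploit the fact that both the Hom-groups and the Burnside functor at $K$ were \emph{defined} as colimits over the compact open subgroups $U$ containing $K$, and to reduce exactness at $K$ to exactness at each such $U$ (where it holds by hypothesis) together with exactness of filtered colimits. First I would observe that the poset $\{U\in\FCO\mid K\leq U\}$, ordered by reverse inclusion, is directed: given $U,V\in\FCO$ containing $K$, the intersection $U\cap V$ is again a compact open subgroup containing $K$, so the system indexing our colimits is filtered. This is the structural fact that makes the whole argument run, since filtered colimits of abelian groups are exact.

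Next I would set up the comparison carefully. For a fixed target $G/L$ with $L\in\FCO$, the assignment $U\mapsto [G/U,G/L]_{\MCOOG}$ is a functor on this directed poset: the transition maps are exactly the pull-back homomorphisms $[G/U,G/L]_{\MCOOG}\to [G/V,G/L]_{\MCOOG}$ induced by a morphism $G/V\to G/U$ described in the paragraph preceding the statement. One must check these transition maps are compatible (i.e.\ functorial) so that the colimit is well defined; this is a routine verification using that pull-backs compose, and I would not belabour it. Since a free resolution $F_*(-)$ consists of functors that are (possibly infinite) direct sums of representables $\Z[-,G/L]_{\MCOOG}$, and since the left adjoint (free abelian group) functor and arbitrary direct sums both commute with filtered colimits, each term $F_n(G/K)$ is canonically identified with $\colim_{K\leq U\in\FCO} F_n(G/U)$. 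The same holds for the augmentation target by the very definition $\underbar{B}(G/K):=\colim_{K\leq U\in\FCO}\underbar{B}(G/U)$.

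With these identifications in place, the sequence
$$F_*(G/K)\to \underbar{B}(G/K)\to 0$$
is, term by term, the filtered colimit of the sequences $F_*(G/U)\to\underbar{B}(G/U)\to 0$ taken over $K\leq U\in\FCO$. Each of these latter sequences is exact because $F_*(-)\to\underbar{B}(-)\to 0$ is a (free, hence exact) resolution and exactness in $\MCOOG\,\mbox{-}\Mod$ is checked objectwise at each $G/U\in\Ob(\SCO^{f})$, as recalled in Section~\ref{cat}. Since filtered colimits are exact in the category of abelian groups, the colimit sequence is exact, which is precisely the assertion.

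The main obstacle I anticipate is not the colimit formalism but the compatibility of the colimit systems across the differentials: I must ensure that the pull-back transition maps defining the colimit commute with the boundary maps $F_n\to F_{n-1}$ and with the augmentation $F_0\to\underbar{B}$, so that I really have a \emph{morphism} of directed systems of chain complexes rather than merely a levelwise family of colimits. This reduces to naturality of the pull-back construction in the target, together with the observation that the differentials, being morphisms in $\MCOOG\,\mbox{-}\Mod$, are natural transformations and hence commute with the maps induced by $G/V\to G/U$. Once this naturality is pinned down, exactness of filtered colimits of abelian groups closes the argument.
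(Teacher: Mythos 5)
Your proposal is correct and is essentially the paper's own argument, which the authors compress into a single line: the result ``follows directly from the fact that direct limits commute with direct sums and are exact.'' Your expansion---directedness of the poset of compact open subgroups containing $K$, identification of each $F_n(G/K)$ with $\colim_{K\leq U\in\FCO}F_n(G/U)$ via the representable/direct-sum structure of free modules, objectwise exactness at each $G/U$, naturality of the transition maps with the differentials, and exactness of filtered colimits---is exactly the intended justification, just written out in full.
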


\begin{proof} This follows directly from the fact that direct limits commute with direct sums and are exact. 
\end{proof}

\subsection{Bredon cohomology for t.d.l.c.~ groups}
\begin{definition} Let $\FC$ be a family of closed subgroups of a t.d.l.c.~group $G$ and
let $\Z(-)$ denote the constant Bredon functor. For $M(-) \in \Ob(\OCG\,\mbox{-}\Mod)$ we define
$$\Hn^*_{\O_{\FC}}(G; M(-)) := \Ext^*_{\O_{\FC}G}(\Z(-), M(-))\quad\text{and}\quad\cd_{\O_{\FC}}G = \pd_{\OCG\,\mbox{-}\Mod} \Z(-).$$
\end{definition}

We now compare $\cd_{\O_\FC} G$ with the rational discrete cohomological dimension $\cd_{\Q}G$ and the Mackey cohomological dimension $\cd_{\MCO}G$ defined above. 

\begin{prop}\label{cd-rational-Bredon}
Let $G$ be a t.d.l.c.~group and let $K$ be a compact subgroup of $G.$  Then 
$$\cd_{\Q} N_G(K)/K \leq \cd_{\OCO } G.$$
In particular,
$$\cd_{\Q} G \leq \cd_{\OCO } G.$$
\end{prop}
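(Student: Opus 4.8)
The plan is to prove the statement $\cd_{\Q} N_G(K)/K \leq \cd_{\OCO} G$ for a compact subgroup $K$, and then deduce the special case $\cd_{\Q} G \leq \cd_{\OCO} G$ by taking $K = 1$ (which is the trivial compact subgroup, with $N_G(1)/1 = G$). The core idea is to transport a finite-length projective resolution of the constant Bredon functor $\Z(-)$ over $\OCO G$ into a finite-length projective resolution of the trivial discrete $\Q[N_G(K)/K]$-module $\Q$ in $\Q[N_G(K)/K]\mbox{-}\operatorname{Dis}$, via an evaluation-type functor at the fixed points of $K$.

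First I would set $W = N_G(K)/K$ and recall the structure of the projective building blocks of Bredon modules from Example~\ref{ex:Bredon-A}: for $L \in \FCO$, the evaluation $P_L(G/H) = \Z[G/H, G/L]_{\OCG} \cong \Z((G/L)^H)$ is the abstract permutation $N_G(H)/H$-module on the $H$-fixed points of $G/L$. The key functorial construction is to evaluate a Bredon module at fixed points of $K$. Concretely, I would define a functor from $\OCO G\mbox{-}\Mod$ to $\Q[W]\mbox{-}\Mod$ sending $M(-)$ to (the $\Q$-linearisation of) $M(G/K)$, or more precisely to a colimit $\colim_{K \leq U \in \FCO} M(G/U)$ carrying its natural $W = N_G(K)/K$-action. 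Using $K = \bigcap_{K \leq U \in \FCO} U$ as established in the excerpt, this fixed-point/colimit evaluation is exact because direct limits are exact and commute with direct sums, exactly as in Proposition~\ref{prop-mackey-ev-comp}.

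The main steps are then as follows. Let $d = \cd_{\OCO} G = \pd_{\OCO G} \Z(-)$; if $d = \infty$ there is nothing to prove, so assume $d < \infty$ and fix a projective resolution $P_*(-) \to \Z(-) \to 0$ of length $d$, with each $P_i(-)$ a direct summand of a direct sum of building blocks $P_L(-)$ with $L \in \FCO$. Applying the evaluation-at-$K$ functor and tensoring with $\Q$, I obtain a complex of $\Q[W]$-modules resolving $\Q[W] \otimes_{\Z[W]}(\text{the evaluation of } \Z(-))$, which evaluates to the trivial module $\Q$. The two things to check are: (i) the resulting $\Q[W]$-modules are projective objects in $\Q[W]\mbox{-}\operatorname{Dis}$, and (ii) the resolution stays exact. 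For (ii), exactness is preserved by the exactness of the colimit/fixed-point functor. For (i), each $P_L(G/U) \cong \Z((G/U)^?)$ evaluates (after the colimit over $U \supseteq K$) to a permutation module whose $\Q$-linearisation is a discrete permutation $\Q[W]$-module with compact open stabilisers; by the characterisation of projectives recalled after~\eqref{eq:perm}, any direct summand of a proper discrete permutation $\Q[W]$-module is projective in $\Q[W]\mbox{-}\operatorname{Dis}$. Hence $\Q$ admits a projective resolution of length at most $d$ in $\Q[W]\mbox{-}\operatorname{Dis}$, giving $\cd_{\Q} W = \pd_{\Q[W]\mbox{-}\operatorname{Dis}} \Q \leq d$, as required; specialising to $K = 1$ yields the in-particular clause.

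The hard part will be step~(i): verifying that the evaluation of a Bredon projective building block $P_L(-)$ at the fixed points of $K$ genuinely produces a \emph{proper discrete} permutation $\Q[W]$-module, i.e.\ that the stabilisers of the basis elements of $(G/L)^K$ (or its colimit over $U \supseteq K$) are compact open subgroups of $W = N_G(K)/K$, so that the projectivity criterion from~\cite{cw16} applies. This requires a careful orbit analysis of the $N_G(K)$-action on $(G/L)^K$: one must identify the stabilisers as subquotients of the form $(N_G(K) \cap gLg^{-1})K/K$ and confirm these are compact open in $W$ using that $L$ is compact open and $K$ is compact. A secondary subtlety is checking that the colimit over compact open $U \supseteq K$ is compatible with the $W$-action and yields a genuinely \emph{discrete} $\Q[W]$-module; this should follow formally from the openness of the $U$ and the definition of the discrete module category, but it is the place where the non-discreteness of $G$ and the fact that $K \notin \FCO$ (when $G$ is non-discrete) must be handled with care.
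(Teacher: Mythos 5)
Your proposal is correct and follows essentially the same route as the paper: evaluate a finite-length free/projective resolution of $\Z(-)$ over $\OCO G$ at the compact subgroup $K$ via the exact colimit over compact open $U\supseteq K$, observe that each $\Z\bigl((G/L)^K\bigr)$ is a permutation $N_G(K)/K$-module with compact open stabilisers of the form $\bigl(U^{x^{-1}}\cap N_G(K)\bigr)/K$, and tensor with $\Q$ to obtain a projective resolution of $\Q$ in $\Q[N_G(K)/K]\mbox{-}\operatorname{Dis}$. The step you flag as the hard part (the orbit and stabiliser analysis of $(G/L)^K$) is exactly the computation the paper carries out in its opening paragraph, so no gap remains.
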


\begin{proof} First note that, given $U \in \FCO$, if $U$ does not contain any conjugate of $K$ then $(G/U)^K$ is empty, otherwise the $N_G(K)/K$-set $(G/U)^K$ decomposes as a disjoint union of transitive sets of the form $N_G(K)\big/\big(U^{x^{-1}}\cap N_G(K)\big)$ for some $x \in G$ such that $K^x\leq U$. 
Now, since $G$ is Hausdorff, $N_G(K)$ is closed in $G$ and hence $U^{x^{-1}}\cap N_G(K)$ is compact and open in $N_G(K)$.
It follows that $(U^{x^{-1}}\cap N_G(K))/K$ is a compact open subgroup of $N_G(K)/K$, and so $\Z(G/U)^K$ is a proper discrete permutation $N_G(K)/K$-module.

Suppose  that $\cd_{\OCO} G =n < \infty.$ 
Then we can choose a length $n$ resolution by free $\OCOG$-modules, each a direct sum of modules of the form $\Z[ - ,G/U]_{\OCO G}.$ 
Now, by~\cite{cc} we can evaluate this at every compact subgroup $K$, and this yields an exact sequence of abelian groups. By the above paragraph, this is a length $n$ resolution of $\Z$ by proper discrete permutation $N_G(K)/K$-modules. 
Tensoring by $\Q$ gives a length $n$ resolution of $\Q$ by proper discrete permutation $\Q[N_G(K)/K]$-modules. The assertion follows (cf.~Equation \eqref{eqn:cd-dis}).
\end{proof}

Also note that the result above for $\cd_{\OCO } G$ also follows  from our Main Theorem.

\begin{question}\label{dim-qu1} For which t.d.l.c.~groups do we have $\cd_{\Q}G=\cd_{\OCO}G$?
\end{question}
There are many examples of discrete groups where $\cd_{\Q}G<\cd_{\OCO}G,$ see for example~\cite{leary-nucinkis}. 
We are not aware of such an example for non-discrete t.d.l.c.~groups that is not merely a slight modification of the known ones.
Furthermore, Question~\ref{dim-qu1} also relates to an algebraic version of the Kropholler--Mislin conjecture~\cite{kropholler-mislin}, which translates in our context as follows.
\begin{question}\label{dim-qu2}  Are there t.d.l.c~groups with $\cd_{\Q}G < \infty$ but with $\cd_{\OCO}G = \infty$?
\end{question}

We now compare the Bredon $\FC$-cohomological dimension with the cohomological dimension over the Mackey category. 
As above, let $\FC$ be a family of subgroups of $G$ such that $\FCO \subseteq \FC \subseteq \FK$, $\O_\FC G$ the associated orbit category and $(\FC,\FO)$ a Mackey system. We follow the argument of \cite[Corollary~3.9]{mp-n}, based on the existence of appropriate restriction and induction functors which we now define.

Let $\sigma: \O_\FC G \to \MCOG$ be the functor given by the identity 
on objects and which sends a $G$-map $G/S \stackrel{g}{\rightarrow}G/K$ to the basic morphism
$G/K \stackrel{1}{\leftarrow} G/K \stackrel{g}{\rightarrow}  G/S$. The functor $\sigma$ yields the following restriction and induction functors:

\begin{align}\nonumber
\mathrm{res}_\sigma\colon \MCOG\,\mbox{-}\Mod &\rightarrow \O_\FC G\,\mbox{-}\Mod\\ \nonumber
M&\mapsto M\circ\sigma\\
&\\\nonumber
\mathrm{ind_\sigma}\colon \O_\FC G\,\mbox{-}\Mod &\rightarrow \MCOG\,\mbox{-}\Mod\\\nonumber
T&\mapsto [-,\sigma(?)]_{\MCOG}\otimes_{\FC}T(?)
\end{align}
Here $?$ denotes the placeholder for an element in $\O_\FC$. Explicitly, for every object $X$ in $\SCO^f$, we get an abelian group
$$\mathrm{ind_\sigma}\,T(X):= [X,\sigma(?)]_{\MCOG}\otimes_{\FC}T(?)=\bigoplus_{K\in\FC}  {[X,G/K]_{\MCOG}\otimes_{\Z}T(G/K)}_{\big/\sim}$$
where $a\otimes\alpha^\ast(m)\sim \alpha_\ast(a)\otimes m$ for all morphisms $\alpha\colon G/H\to G/K$ and elements $m\in T(G/K)$ and $a\in[X,G/H]_{\MCOG}$. 

In particular, if $X=G/H$ for some $H\in\FC$, it follows from Remark~\ref{rem:freeab}, that
\begin{align}\label{eq:ind}
\mathrm{ind_\sigma}\,T(G/H)&=[G/H,\sigma(?)]_{\MCOG}\otimes_{\FC}T(?)\\ \nonumber
&=\bigoplus_{L\in \O(H)\atop \text{up to $H$-conjugacy}}\Z\otimes_{N_H(L)/L}\big({\Z[G/L,?]_{\O_\FC G}}\otimes_{\FC}T(?)\big)\\ \nonumber
&=\bigoplus_{L\in \O(H)\atop \text{up to $H$-conjugacy}}\Z\otimes_{N_H(L)/L}T(G/L)
\end{align}
(cf. \cite[Proposition~3.6 and Theorem~3.7 ]{mp-n}).
In particular, in combination with Remark~\ref{r:burnside} this yields:
$$\mathrm{ind_\sigma}\Z(G/H) \cong \bigoplus_{L\in \O(H)\atop \text{up to $H$-conjugacy}}\Z_L\,  \cong\, \underbar{B}(G/H)$$
and 
$$\mathrm{ind_\sigma}\Z(-) \cong \underbar{B}(-).$$
Restriction and induction come together with the usual adjoint isomorphism. In particular, since restriction is exact, induction maps projectives to projectives.

\begin{thm}\label{cd-Mackey-Bredon} Let $G$ be a t.d.l.c.~group and $\FC$ a family of compact subgroups containing $\FCO.$ Then  $$\cd_{\MCO} G \leq \cd_{\O_\FC} G,$$
and for each $M(-) \in \Ob(\MCOG\mbox{-}\Mod)$ there is a natural isomorphism
$$ \Hn^*_{\O_\FC}(G; M(-)) \cong \Hn^*_{\MCO}(G; M(-)).$$
\end{thm}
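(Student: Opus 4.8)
The plan is to exploit the restriction--induction adjunction $(\mathrm{ind}_\sigma, \mathrm{res}_\sigma)$ that has just been set up, together with the key identity $\mathrm{ind}_\sigma\,\Z(-) \cong \underbar{B}(-)$. The strategy mirrors \cite[Corollary~3.9]{mp-n}: I would first establish the cohomology isomorphism, and the dimension inequality will then drop out as a corollary of the existence of a good resolution.

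\textbf{Step 1: The cohomology isomorphism.} First I would take a projective resolution $P_*(-) \epi \Z(-)$ of the constant Bredon functor in $\OCG\,\mbox{-}\Mod$. Since $\mathrm{res}_\sigma$ is exact and $\mathrm{ind}_\sigma$ sends projectives to projectives (both recorded just above the statement), applying $\mathrm{ind}_\sigma$ yields a projective resolution $\mathrm{ind}_\sigma P_*(-) \epi \mathrm{ind}_\sigma \Z(-) \cong \underbar{B}(-)$ in $\MCOG\,\mbox{-}\Mod$; here I must check that $\mathrm{ind}_\sigma$ is exact on this particular resolution, which follows because $\mathrm{ind}_\sigma$ is a left adjoint (hence right exact) and because the terms are projective, so higher derived functors vanish. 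Then I would compute
$$
\Hn^*_{\MCO}(G; M(-)) = \Ext^*_{\MCOG}(\underbar B(-), M(-)) = \Hn^* \Hom_{\MCOG\text{-}\Mod}(\mathrm{ind}_\sigma P_*(-), M(-)),
$$
and invoke the adjoint isomorphism $\Hom_{\MCOG\text{-}\Mod}(\mathrm{ind}_\sigma P_*, M) \cong \Hom_{\OCG\text{-}\Mod}(P_*, \mathrm{res}_\sigma M)$ naturally in each degree. Taking cohomology of the right-hand complex gives exactly $\Ext^*_{\OCG}(\Z(-), \mathrm{res}_\sigma M(-)) = \Hn^*_{\O_\FC}(G; M(-))$, which establishes the desired natural isomorphism.

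\textbf{Step 2: The dimension inequality.} For $\cd_{\MCO} G \leq \cd_{\O_\FC} G$, I would argue directly from the resolutions. If $\cd_{\O_\FC} G = n < \infty$, choose a length-$n$ projective resolution $P_*(-) \epi \Z(-)$ over $\OCG$. Applying $\mathrm{ind}_\sigma$ produces a length-$n$ projective resolution of $\underbar{B}(-)$ over $\MCOG$, whence $\pd_{\MCOG}\underbar{B}(-) \leq n$, i.e. $\cd_{\MCO} G \leq \cd_{\O_\FC} G$. (The case $\cd_{\O_\FC} G = \infty$ is vacuous.)

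\textbf{Main obstacle.} The delicate point is verifying that $\mathrm{ind}_\sigma$ genuinely sends a projective $\OCG$-resolution to a projective $\MCOG$-resolution that still resolves $\underbar{B}(-)$ --- that is, that exactness is preserved along the whole complex and not merely at the augmented end. Since $\mathrm{ind}_\sigma$ is only right exact a priori, I must ensure its left-derived functors vanish on the modules appearing, which is guaranteed precisely because each $P_i(-)$ is projective (so $\mathrm{res}_\sigma$-acyclicity for the adjoint, combined with projectivity, forces the higher $\mathrm{Tor}$-type terms of the $\otimes_\FC$ in the definition of $\mathrm{ind}_\sigma$ to vanish). Concretely, this reduces to checking that tensoring a free $\OCG$-module $\Z[-,G/K]_{\OCG}$ under $\mathrm{ind}_\sigma$ behaves as computed in Equation~\eqref{eq:ind}, so that $\mathrm{ind}_\sigma$ is exact on free---hence on projective---modules. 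Once this flatness-type property is confirmed, both the isomorphism and the inequality follow formally from the adjunction.
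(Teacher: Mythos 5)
Your overall strategy --- apply $\mathrm{ind}_\sigma$ to a projective Bredon resolution of $\uZ(-)$, use $\mathrm{ind}_\sigma\,\uZ(-)\cong\underbar B(-)$, and deduce both the dimension inequality and the cohomology isomorphism from the adjunction --- is exactly the paper's. But there is a genuine gap at the one point where all the work lies: the exactness of the induced complex $\mathrm{ind}_\sigma\,P_*(-)\to\underbar B(-)$. You assert this holds ``because the terms are projective, so higher derived functors vanish.'' That is not an argument: the homology of $\mathrm{ind}_\sigma\,P_*(-)$ in positive degrees is by definition the left derived functor $L_i\mathrm{ind}_\sigma(\uZ)$, and projectivity of the $P_i(-)$ is what makes that computation legitimate, not what makes the answer zero. (Compare: for a finite group $Q$, applying the right exact coinvariants functor $\Z\otimes_{\Z Q}-$ to a free resolution of $\Z$ computes $H_*(Q;\Z)$, which certainly need not vanish; your reasoning would apply verbatim there.) The ``Main obstacle'' paragraph restates the same circular claim rather than closing it, and ``$\mathrm{ind}_\sigma$ is exact on free modules'' is not a meaningful substitute, since the augmented complex $P_*(-)\to\uZ(-)$ is not a split complex of free modules.

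What is actually needed, and what the paper supplies, is the following. After reducing to free $P_*(-)$ by an Eilenberg swindle, one evaluates at each $G/H$; by \eqref{eq:ind} this gives $\mathrm{ind}_\sigma\,P_*(G/H)\cong\bigoplus_L\Z\otimes_{N_H(L)/L}P_*(G/L)$, a direct sum of coinvariants of the complexes $P_*(G/L)\to\Z$ under the \emph{finite} groups $N_H(L)/L$. Since coinvariants under a finite group is only right exact, one must show that each augmented complex $P_*(G/L)\to\Z$ of $N_G(L)/L$-modules splits upon restriction to every finite subgroup $N_H(L)/L$; this is the content of the argument of \cite[Theorem~3.2]{N3}, and it uses the specific permutation-module structure $\Z[(G/K)^L]$ of evaluations of free Bredon modules. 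Without this splitting input your proof does not go through; with it supplied, your Steps 1 and 2 are correct and coincide with the paper's proof.
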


\begin{proof}

Let $P_{\ast}(-)\to\uZ$  be a projective resolution in $\O_\FC G\,\mbox{-}\Mod$ of minimal length. By applying the induction functor, we obtain a chain complex $\mathrm{ind_\sigma}\,P_{\ast}(-)\to B(-)$ in $ \MCOG\,\mbox{-}\Mod $ where each $\mathrm{ind_\sigma}\,P_{\ast}(-)$ is projective.  It suffices to prove that the complex is exact. Using a standard Eilenberg-Swindle argument, we may assume that each $P_{\ast}(-)$ is free.

To this end we prove that the functor $\mathrm{ind_\sigma}\,P_{\ast}(-)\to \underbar B(-)$ is exact when evaluated at each proper discrete transitive $G$-space $G/H$. 
By \eqref{eq:ind} this gives
$$\mathrm{ind_\sigma}\,P_{\ast}(G/H) \cong \bigoplus_{L\in \O(H)\atop \text{up to $H$-conjugacy}}\Z\otimes_{N_H(L)/L}P(G/L) \to \bigoplus_{L\in \O(H)\atop \text{up to $H$-conjugacy}} \Z_L.$$
Hence it suffices to show that each $P_{\ast}(G/L)\to\Z$ is split when restricted to every finite subgroup $N_H(L)/L$ of $N_G(L)/L.$
This follows from the argument of~\cite[Theorem~3.2]{N3}.
\end{proof}

This now also allows us to compare the discrete rational cohomological dimension with the cohomological dimension in the Mackey category. 

\begin{cor}\label{cor-mackey-rational-comp} For every t.d.l.c.~group $G$ one has $\cd_{\MCOO}G \geq \cd_\Q G.$
\end{cor}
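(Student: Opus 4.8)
The plan is to run, in the Mackey category, the same kind of argument that proves Proposition~\ref{cd-rational-Bredon} in the Bredon category, using Proposition~\ref{prop-mackey-ev-comp} as the evaluation tool in place of \cite{cc}. We may assume $n:=\cd_{\MCOO}G<\infty$, since otherwise there is nothing to prove. I would then choose a free resolution $F_*(-)\to\underbar{B}(-)\to 0$ in $\MCOOG\,\mbox{-}\Mod$ of length $n$. Because every object of $\SCOO^f$ is a finite coproduct of homogeneous spaces $G/K$ with $K\in\FCO$, and $[-,\bigsqcup_i G/K_i]_{\MCOOG}\cong\bigoplus_i[-,G/K_i]_{\MCOOG}$ (a span into a coproduct splits over the components), I may arrange each $F_k(-)$ to be a direct sum of representable functors $[-,G/K]_{\MCOOG}$ with $K\in\FCO$.

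The crucial move is to evaluate this resolution at the \emph{trivial} subgroup. By van Dantzig's theorem the compact open subgroups form a neighbourhood basis of the identity, so, $G$ being Hausdorff, $1=\bigcap_{U\in\FCO}U$; in particular $1$ is a compact subgroup, and Proposition~\ref{prop-mackey-ev-comp} applies to it. Evaluating $F_*(-)\to\underbar{B}(-)\to 0$ at $1$, and equipping each evaluation with its natural $G$-action by translations, Proposition~\ref{prop-mackey-ev-comp} furnishes an exact sequence $F_*(G/1)\to\underbar{B}(G/1)\to 0$ of $\Z[G]$-modules.

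It then remains to identify these $\Z[G]$-modules. First, $\underbar{B}(G/1)=\colim_{U\in\FCO}\underbar{B}(G/U)\cong\Z$ with trivial $G$-action: by Remark~\ref{r:burnside} each $\underbar{B}(G/U)$ is free abelian on the classes $[\xymatrix@C=12pt{G/U&G/L\ar[l]\ar[r]&\bullet}]$ with $L$ an open subgroup of $U$, and the pull-back transition maps send each such class to $[U:L]$ times the unit class $[\xymatrix@C=12pt{G/U&G/U\ar[l]\ar[r]&\bullet}]$ once $V\leq L$ is taken normal in $U$; hence the colimit is infinite cyclic, generated by the ($G$-fixed) unit thread. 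Second, for each $K\in\FCO$ the module $[G/1,G/K]_{\MCOOG}=\colim_{U\in\FCO}[G/U,G/K]_{\MCOOG}$ is the permutation module $\Z[G/K]$: a basic span $\xymatrix@C=12pt{G/1&G/L\ar[l]\ar[r]&G/K}$ forces $L=1$, so such spans are classified by $G/K$, and precomposition with the translation automorphisms of $G/1$ permutes them exactly as left translation on $G/K$. Consequently each $F_k(G/1)\otimes_\Z\Q$ is a proper discrete permutation $\Q[G]$-module as in~\eqref{eq:perm}, hence projective in $\Q[G]\,\mbox{-}\operatorname{Dis}$. Since $\otimes_\Z\Q$ is exact, applying it yields a length-$n$ projective resolution $F_*(G/1)\otimes_\Z\Q\to\Q\to 0$ in $\Q[G]\,\mbox{-}\operatorname{Dis}$, whence $\cd_\Q G=\pd_{\Q[G]\,\mbox{-}\operatorname{Dis}}\Q\leq n=\cd_{\MCOO}G$ by~\eqref{eqn:cd-dis}.

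The hard part is exactly the third paragraph. Since $1\notin\FCO$, the object $G/1$ has no intrinsic meaning in $\SCOO^f$ and exists only as the filtered colimit over $\FCO$; so both identifications $\underbar{B}(G/1)\cong\Z$ and $[G/1,G/K]_{\MCOOG}\cong\Z[G/K]$, \emph{together with their $G$-actions}, have to be extracted from the colimit rather than read off from a single representable as in the discrete case of~\cite{mp-n}. This is precisely the ``trivial subgroup is not compact open'' difficulty flagged in the introduction, and controlling the multiplicities appearing in the pull-back transition maps is where the real work lies.
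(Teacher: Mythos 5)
Your proposal is correct and follows essentially the same route as the paper: evaluate a free resolution of $\underbar{B}(-)$ at the trivial subgroup via Proposition~\ref{prop-mackey-ev-comp}, identify $\underbar{B}(G/1)\cong\Z$ and $[G/1,G/K]_{\MCOOG}\cong\Z[G/K]$, and tensor with $\Q$ to obtain a proper discrete permutation resolution. Your third paragraph merely spells out the colimit computations that the paper dispatches by appealing to Equation~\eqref{eq:ind} and Remark~\ref{r:burnside}, so this is the same proof with more detail.
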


\begin{proof}  By Proposition~\ref{prop-mackey-ev-comp} we can evaluate any free resolution $F_*(-) \to \underbar{B}(-) \to 0$ of Mackey functors at any compact subgroup. 
Evaluation at the trivial subgroup~$\{1\}$ yields a resolution $F_*(G/1) \to \underbar{B}(G/1) \to 0.$ 
Furthermore, by Equation~\ref{eq:ind} we have $\Z[G/1,G/L]_{\MCOOG}\cong \Z[G/L]$ and $\underbar B(G/1) =\Z$, from which we get a resolution of $\Z$ by permutation modules with compact open stabilisers. 
Hence, tensoring with $\Q$ any finite length resolution of $\underbar{B}(-)$ by free Mackey functors gives rise to a finite length resolution of $\Q$ by rational discrete permutation modules.
\end{proof}

Putting the above results together, we obtain the result asserted in the main theorem, comparing the rational discrete, Mackey and Bredon cohomological dimensions of a t.d.l.c.~group~$G$.
\begin{proof}[Proof of the Main Theorem]
The statement now follows from Theorem~\ref{cd-Mackey-Bredon} applied to the families $\MCOO$ and $\OCO$ and Corollary~\ref{cor-mackey-rational-comp}.
\end{proof}


\section{Classifying spaces for families}\label{sec:cw}

In this section, we revisit the related notion of {\em geometric dimension} to the cohomological dimensions considered above, and extend some known results to the class of t.d.l.c.~groups. In order to do so, we introduce some additional notation to the one used above.

Classifying spaces for groups relative to various families of subgroups have been studied extensively for discrete and locally compact groups. A good resource for different approaches  is L\"uck's survey on classifying spaces~\cite{luecksurvey}. There are two common approaches, one via numerable spaces, and another as terminal object in the $G$-homotopy category of $G$-CW-complexes with isotropy in a family $\FF.$ 
It follows from \cite[Lemma 3.5]{luecksurvey} that for a t.d.l.c.~group $G$ and a family of closed subgroups $\FC$ of $G$ such that $\FCO \subseteq \FC \subseteq \FK$, these two models agree up to $G$-homotopy equivalence and, furthermore, there is a $G$-homotopy equivalence $E_{\FC} G \simeq E_{\FCO} G.$ 

Hence, referring to \cite[\S1.1]{luecksurvey} for the background on \mbox{CW-complexes} for groups, we define the following:
\begin{definition} \cite{cw16} Let $G$ be a t.d.l.c.~group and let $\FC$ be a family of subgroups such that $\FCO \subseteq \FC \subseteq \FK$.
Given a $G$-CW-complex $X$ we say that $X$ is a model for $E_{\FC} G$ if the following three conditions hold:
\begin{itemize}
\item[(i)] $X$ is contractible
\item[(ii)] all the isotropy groups of $X$ belong to $\FC$ and
\item[(iii)] for all $H \leq G$, we have
$$ X^H  \begin{cases} \mbox{is contractible,}& \mbox{ if } H \in \FC, \\
			=\emptyset, &\mbox{ otherwise.} \end{cases}$$
\end{itemize}
\end{definition}

 \begin{definition}			
The {\em geometric dimension} of $G$ relative to the family $\FC$, denoted $\gd_{\FC} G$, is the smallest dimension of a model for $E_{\FC} G.$ 
If there is no finite-dimensional model for $E_{\FC} G$ then we set $\gd_{\FC} G = \infty.$
\end{definition}

\begin{example}
(a) Tree almost automorphism groups $\mathcal A^D_{qr}$ have infinite geometric dimension \cite[Remark~1.5]{st}.

(b) Hyperbolic t.d.l.c. groups have finite geometric dimension~\cite{ccc}.

(c) By Bass--Serre theory, $\gd_{\FCO} G=1$ if and only if $G$ is isomorphic to the fundamental group of a non-trivial finite graph of profinite groups. 

(d) Let $p$ be a prime. For every $n\geq 2$ the Bruhat--Tits building associated to $\mathrm{SL}_n(\Q_p)$ is a model for proper actions of smallest dimension $n$.
\end{example}

The following result is well-known for discrete groups and the proof translates to t.d.l.c.~groups verbatim.
\begin{prop}\label{easy-prop} 
Let $G$ be a  t.d.l.c.~group. Then $\cd_{\O_\FC}G \leq \gd_{\FC}G.$ \qed
\end{prop}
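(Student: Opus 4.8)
The goal is to prove $\cd_{\O_\FC}G \leq \gd_{\FC}G$, which is the standard statement that Bredon cohomological dimension is bounded above by Bredon geometric dimension, now in the t.d.l.c.~setting.

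The plan is to reduce the statement to a purely algebraic fact about the Bredon module category $\OCG\,\mbox{-}\Mod$ by extracting a free resolution of the constant functor $\Z(-)$ from a cellular model of $E_\FC G$. First I would assume $\gd_\FC G = n < \infty$ (otherwise there is nothing to prove) and fix an $n$-dimensional model $X$ for $E_\FC G$. The key object is the Bredon chain complex of $X$: for each dimension $k$, let $\underline{C}_k(X)(-)$ be the functor sending $G/H \in \OCG$ to the free abelian group on the $k$-cells of the fixed-point complex $X^H$, equivalently $C_k^{\mathrm{cell}}(X^H)$. Since $X$ is a $G$-CW-complex with isotropy in $\FC$, each $k$-skeleton is built from cells of the form $G/K \times D^k$ with $K \in \FC$, so each $\underline{C}_k(X)(-)$ is a direct sum of free Bredon modules of the form $\Z[-, G/K]_{\OCG}$ (cf.~Example~\ref{ex:Bredon-A}); in particular it is free, and it vanishes for $k > n$.

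The second step is to verify that this complex is a resolution of $\Z(-)$. By condition~(iii) in the definition of a model, for every $H \in \FC$ the fixed-point space $X^H$ is contractible, and for $H \notin \FC$ it is empty; hence evaluating the augmented cellular chain complex $\underline{C}_*(X)(-) \to \Z(-)$ at any object $G/H$ of $\OCG$ computes the reduced cellular homology of the contractible space $X^H$ (the case $H \notin \FC$ being the trivial complex $0 \to \Z(G/H) = 0$). Contractibility forces this to be exact in every positive degree and to have augmentation onto $\Z$ in degree zero. Since exactness in $\OCG\,\mbox{-}\Mod$ is checked object-wise (Section~\ref{cat}), this shows $0 \to \underline{C}_n(X)(-) \to \cdots \to \underline{C}_0(X)(-) \to \Z(-) \to 0$ is a free, hence projective, resolution of length $n$. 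Therefore $\pd_{\OCG\,\mbox{-}\Mod}\Z(-) \leq n$, which is precisely $\cd_{\O_\FC}G \leq \gd_\FC G$.

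The main obstacle, and the reason the statement is genuinely a t.d.l.c.~result rather than a formal transcription of the discrete case, is the verification that the cellular chain modules are \emph{free} Bredon modules over $\OCG$, i.e.~that an equivariant cell of type $G/K$ contributes exactly a copy of $\Z[-,G/K]_{\OCG}$ to each fixed-point complex. This rests on identifying $(G/K)^H$ with the appropriate $G$-set of morphisms and using that these cells are indexed by $K \in \FC$ with $\FCO \subseteq \FC \subseteq \FK$; the $G$-homotopy equivalence $E_\FC G \simeq E_\FCO G$ recalled at the start of Section~\ref{sec:cw} guarantees we may work with a model whose isotropy genuinely lies in $\FC$. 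Once this bookkeeping is in place, the contractibility argument is formally identical to the discrete setting, which is why the proof translates verbatim as claimed.
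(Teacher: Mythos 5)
Your proposal is correct and is precisely the standard argument that the paper invokes when it says the discrete-group proof ``translates to t.d.l.c.~groups verbatim'': extract the Bredon cellular chain complex of an $n$-dimensional model for $E_{\FC}G$, identify each chain module with a direct sum of free modules $\Z[-,G/K]_{\OCG}$ via $(G/K)^H$, and use contractibility of the fixed-point sets $X^H$ for $H\in\FC$ to get object-wise exactness, hence a length-$n$ free resolution of $\Z(-)$. One minor remark: since the objects of $\OCG$ are exactly the $G/H$ with $H\in\FC$, your parenthetical about the case $H\notin\FC$ never arises (and as written the claim $\Z(G/H)=0$ is not quite right, since the constant functor takes the value $\Z$ wherever it is defined), but nothing in the argument depends on it.
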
 
We also have a partial converse for this proposition:
\begin{prop}\label{cd-gd-prop}
Let $G$ be a t.d.l.c.~group and $\FC$ a family of subgroups such that $\FCO \subseteq \FC \subseteq \FK$, then, for all groups with $\cd_{\O_\FC} G \geq 3,$ we have
$$\cd_{\O_\FC} G = \gd_{\FC} G.$$
\end{prop}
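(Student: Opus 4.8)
The starting point is Proposition~\ref{easy-prop}, which already gives $\cd_{\O_\FC} G \le \gd_\FC G$, so the task reduces to proving the reverse inequality $\gd_\FC G \le \cd_{\O_\FC} G$ under the hypothesis $n := \cd_{\O_\FC} G \ge 3$. The plan is to prove a Bredon-cohomological Eilenberg--Ganea theorem, adapting the construction of L\"uck--Meintrup~\cite{lueck-meintrup} to the t.d.l.c.~setting. The one genuinely new ingredient is topological: I would first reduce to \emph{compact open} isotropy. Using the $G$-homotopy equivalence $E_\FC G \simeq E_\FCO G$ recorded in Section~\ref{sec:cw} (via~\cite[Lemma~3.5]{luecksurvey}) together with the fact that $K = \bigcap_{K \le U \in \FCO} U$ for every compact subgroup $K$, any $G$-CW-complex with isotropy in $\FCO$ that is a model for $E_\FCO G$ is automatically a model for $E_\FC G$. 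Consequently it suffices to build an $n$-dimensional model for $E_\FCO G$, and throughout the construction every orbit $G/U$ (with $U \in \FCO$) is a discrete $G$-set and every Weyl group $N_G(U)/U$ is discrete.

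With this reduction in place, the argument becomes one of realizing an algebraic resolution geometrically. Starting from some (a priori infinite-dimensional) model $X$ for $E_\FCO G$, whose Bredon cellular chain complex is a free resolution of the constant functor $\Z(-)$ in $\O_\FCO G\,\mbox{-}\Mod$ (which has enough projectives by Example~\ref{ex:Bredon-A}), I would invoke $\cd_{\O_\FCO} G \le n$ and a dimension-shifting argument (Schanuel's lemma in this abelian category) to produce a length-$n$ resolution
$$0 \to P_n \to F_{n-1} \to \cdots \to F_0 \to \Z(-) \to 0,$$
with each $F_k$ free on orbits $G/U$, $U \in \FCO$, and $P_n$ projective. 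The goal is to realize this complex as the augmented Bredon chain complex of an $n$-dimensional $G$-CW-model, built skeleton by skeleton: attach free equivariant cells $G/U \times D^k$ for $k \le n$ so that the $k$-th Bredon chain module matches $F_k$, while maintaining at each stage that $X^H$ is $(k-1)$-connected for $H \in \FCO$ and empty otherwise.

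The heart of the matter is the equivariant obstruction theory at the top dimension, and this is exactly where the hypothesis $n \ge 3$ enters. Because $n \ge 3$ forces $n - 1 \ge 2$, the $(n-1)$-skeleton can be arranged so that each fixed-point set $X^H$ is simply connected; the equivariant Hurewicz theorem then identifies the relevant homotopy groups of the fixed-point systems with Bredon homology, allowing a final layer of $n$-cells to realize the projective module $P_n$ and simultaneously kill $\pi_{n-1}$ of every $X^H$, so that the resulting $n$-dimensional complex is contractible with the correct fixed-point behaviour. Since the reduction to $\FCO$ has rendered all orbits and Weyl groups discrete, the obstruction groups and the equivariant Hurewicz/Whitehead theorems invoked here are precisely those of the discrete theory of~\cite{lueck-meintrup}, and no new homotopy-theoretic phenomena arise from the t.d.l.c.~topology.

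I expect the main obstacle to be the top-dimensional step: ensuring that the attached $n$-cells realize a genuinely \emph{projective} Bredon module and leave all fixed-point sets contractible, rather than killing homology only at the cost of spurious $(n+1)$-cells. This is the Eilenberg--Ganea gap, and the bound $n \ge 3$ is precisely what guarantees simple connectivity of the fixed-point sets at the critical stage, hence the applicability of Hurewicz; at $n = 2$ the argument breaks down, mirroring the classical discrete obstruction. A secondary technical point to verify is that the length-$n$ resolution can indeed be taken with free generators supported on compact open subgroups (so that it is realizable by discrete orbit cells), which is where the interplay between $\cd_{\O_\FC} G$ and $\cd_{\O_\FCO} G$ afforded by the homotopy equivalence $E_\FC G \simeq E_\FCO G$ must be used carefully.
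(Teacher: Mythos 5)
Your argument is correct and is essentially the paper's: the authors dispose of this proposition by citing \cite[Theorems 0.1, 0.2]{lueck-meintrup} --- whose Eilenberg--Ganea-style construction (realizing a length-$n$ free-then-projective Bredon resolution cell by cell, with $n\geq 3$ securing simple connectivity of fixed-point sets so that the equivariant Hurewicz argument closes the top dimension) is exactly what you reconstruct, and which is already formulated for locally compact Hausdorff groups with compact open isotropy --- together with \cite[Lemma 3.5]{luecksurvey} for the passage between $\FC$ and $\FCO$. So no new homotopy-theoretic input is needed beyond what you describe, and your sketch matches the cited proof.
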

\begin{proof} 
This follows directly from \cite[Theorems 0.1,0.2]{lueck-meintrup} and \cite[Lemma 3.5]{luecksurvey}.
\end{proof}

\begin{cor}\label{change-fam-Bredon}
Let $G$ be a t.d.l.c.~group and let $\FC$ be a family of closed subgroups such that $\FCO \subseteq \FC \subseteq \mathfrak{K}$, and let $\cd_{\OCO} G \geq 3.$  Then
$$\cd_{\O_{\FC}} G = \cd_{\OCO } G.$$ 
\end{cor}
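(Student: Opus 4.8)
The plan is to reduce the asserted equality of Bredon cohomological dimensions to the corresponding equality of \emph{geometric} dimensions, and then translate back using the two propositions of this section together with the homotopy-theoretic input recorded at its beginning.

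The geometric input is the following. By \cite[Lemma~3.5]{luecksurvey}, as noted at the start of Section~\ref{sec:cw}, for every family $\FC$ with $\FCO\subseteq\FC\subseteq\FK$ there is a $G$-homotopy equivalence $E_{\FC}G\simeq E_{\FCO}G$. Since $G$ is t.d.l.c., every $G$-CW-complex has open, hence compact open, cell-stabilisers, and every compact subgroup is contained in a compact open one; thus a $G$-CW-complex realising the minimal dimension for one of the two families also serves as a model of the same dimension for the other. I would use this to obtain the key identity $\gd_{\FC}G=\gd_{\FCO}G$. Feeding the hypothesis $\cd_{\OCO}G\geq 3$ into Proposition~\ref{cd-gd-prop} applied to $\FCO$ then gives $\cd_{\OCO}G=\gd_{\FCO}G$, so that
$$\gd_{\FC}G=\gd_{\FCO}G=\cd_{\OCO}G\geq 3.$$
Combining this with Proposition~\ref{easy-prop} already yields one inequality, namely $\cd_{\O_{\FC}}G\leq\gd_{\FC}G=\cd_{\OCO}G$.

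For the reverse inequality I would apply Proposition~\ref{cd-gd-prop} to $\FC$ itself: the moment one knows $\cd_{\O_{\FC}}G\geq 3$, it delivers $\cd_{\O_{\FC}}G=\gd_{\FC}G=\cd_{\OCO}G$, completing the argument. Hence the whole proof rests on verifying that we are genuinely in the range where Proposition~\ref{cd-gd-prop} applies to $\FC$.

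The main obstacle is exactly this last point: ruling out the low-dimensional Eilenberg--Ganea-type gap, in which $\cd_{\O_{\FC}}G=2$ while $\gd_{\FC}G=3$. For $\cd_{\OCO}G\geq 4$ there is nothing to do, since then $\gd_{\FC}G=\cd_{\OCO}G\geq 4$ forces $\cd_{\O_{\FC}}G=\gd_{\FC}G$ by the full strength of \cite[Theorems~0.1,~0.2]{lueck-meintrup}, via the bound $\gd_{\FC}G\leq\max\{3,\cd_{\O_{\FC}}G\}$. The delicate case is $\cd_{\OCO}G=3$: here $\gd_{\FC}G=3$, and the bound only excludes $\cd_{\O_{\FC}}G\leq 1$, leaving open the possibility $\cd_{\O_{\FC}}G=2$ that would contradict the claimed equality. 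I expect this boundary case to be the hard part, and I would settle it by a direct cohomological comparison across the inclusion of orbit categories $\OCO G\hookrightarrow\OCG$ — transporting a non-vanishing degree-$3$ class witnessing $\cd_{\OCO}G=3$ to a non-vanishing class over $\O_{\FC}G$ through the induction/restriction adjunction — thereby forcing $\cd_{\O_{\FC}}G\geq 3$ and eliminating the gap.
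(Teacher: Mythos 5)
Your overall route is the paper's own: the proof given there is exactly ``combine Proposition~\ref{cd-gd-prop} with \cite[Lemma 3.5]{luecksurvey}'', i.e.\ the identity $\gd_{\FC}G=\gd_{\FCO}G$ together with the identification of $\cd$ and $\gd$ above dimension $2$. You have, however, put your finger on a point that the paper's one-line proof passes over in silence: Proposition~\ref{cd-gd-prop} applied to the family $\FC$ requires $\cd_{\O_\FC}G\geq 3$ as input, whereas the hypothesis only supplies $\cd_{\OCO}G\geq 3$. Your analysis of when this matters is accurate --- for $\cd_{\OCO}G\geq 4$ the bound $\gd_{\FC}G\leq\max\{3,\cd_{\O_\FC}G\}$ from \cite{lueck-meintrup} closes the argument, and the only problematic configuration is $\cd_{\OCO}G=\gd_{\FC}G=3$ with $\cd_{\O_\FC}G=2$.

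Where your write-up stops short is in actually excluding that configuration: ``transporting a non-vanishing degree-$3$ class through the induction/restriction adjunction'' is the right instinct but is not yet an argument, because the adjunction only identifies $\Hn^{*}_{\OCO}(G;\mathrm{res}_\iota N)$ with $\Hn^{*}_{\O_{\FC}}(G;N)$, so you must also know that every $\OCO G$-module arises as a restriction. The missing observations are: (i) the inclusion $\iota\colon\OCO G\hookrightarrow \O_\FC G$ is a full subcategory, so every $\OCO G$-module $M$ equals $\mathrm{res}_\iota N$ for $N$ the right Kan extension of $M$ along $\iota$; (ii) induction $\mathrm{ind}_\iota$ sends $\Z[-,G/U]_{\OCO G}$ to $\Z[-,G/U]_{\O_\FC G}$ and the constant functor $\uZ$ to $\uZ$, and is exact because $\mathrm{ind}_\iota P(G/K)=\colim_{K\leq U\in\FCO}P(G/U)$ is a filtered colimit --- this is the same evaluation-at-compact-subgroups argument from \cite{cc} that the paper already uses in Proposition~\ref{cd-rational-Bredon}. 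Together these give $\Ext^{n}_{\OCO G}(\uZ,M)\cong\Ext^{n}_{\O_\FC G}(\uZ,N)$ for all $n$, hence $\cd_{\OCO}G\leq\cd_{\O_\FC}G$ unconditionally; this both settles your boundary case and renders the case split unnecessary. With that supplied your proof is complete; as written, the last step is a plan rather than a proof.
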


\begin{proof}
This follows from Proposition~\ref{cd-gd-prop} below and \cite[Lemma 3.5]{luecksurvey}, where it is shown that $E_{\FC} G$ and $E_{\OCO} G$ are $G$-homotopy equivalent.
\end{proof}
Let $\mu$ be a left-invariant Haar measure of $G$. If $\sup\{\mu(K)\mid K\in \FCO\}<\infty$ the t.d.l.c.~group $G$ is said to be {\em $CO$-bounded}.
\begin{cor} Let $G$ be a non-compact t.d.l.c.~group. If $\gd_{\FCO} G=1$ then $\cd_{\O_\FCO} G=1$.
In addition, if $G$ is compactly generated and $CO$-bounded, then the reverse implication holds.
\end{cor}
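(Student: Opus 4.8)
The statement asserts two implications, and the plan is to treat them separately, in both cases routing through the rational discrete cohomological dimension by means of the Main Theorem and Remark~\ref{rem:rdcd}.

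\textbf{Forward implication.} Assume $\gd_{\FCO}G=1$. Proposition~\ref{easy-prop} immediately gives the upper bound $\cd_{\O_\FCO}G\le\gd_{\FCO}G=1$. For the matching lower bound I would use that $G$ is non-compact: by Remark~\ref{rem:rdcd}(1) this forces $\cd_\Q G\ge 1$, and then the Main Theorem yields $\cd_{\O_\FCO}G\ge\cd_{\MCOO}G\ge\cd_\Q G\ge 1$. Hence $\cd_{\O_\FCO}G=1$. This direction uses no extra hypotheses on $G$.

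\textbf{Reverse implication.} Now assume in addition that $G$ is compactly generated and $CO$-bounded, and that $\cd_{\O_\FCO}G=1$. First, the Main Theorem gives $\cd_\Q G\le\cd_{\O_\FCO}G=1$, while non-compactness together with Remark~\ref{rem:rdcd}(1) forces $\cd_\Q G\ge 1$; therefore $\cd_\Q G=1$. The remaining work is purely structural: I would invoke the totally disconnected analogue of the Stallings--Swan theorem for rational discrete cohomology, established in~\cite{cw16}, to the effect that a compactly generated, $CO$-bounded t.d.l.c.~group with $\cd_\Q G=1$ is isomorphic to the fundamental group of a non-trivial finite graph of profinite groups. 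Here compact generation is what forces the quotient graph to be finite (cocompactness of the action on the Bass--Serre tree), $CO$-boundedness keeps the vertex and edge stabilisers compact open of bounded measure so the splitting is genuinely over $\FCO$, and the graph is non-trivial because $G$ is non-compact. With this decomposition in hand, the characterisation recorded in the example above (Bass--Serre theory) yields $\gd_{\FCO}G=1$ directly.

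\textbf{Main obstacle.} The forward implication and the two reductions to $\cd_\Q G=1$ are formal consequences of the Main Theorem and Remark~\ref{rem:rdcd}, so the genuine content lies entirely in the structural input used for the reverse direction: passing from the cohomological statement $\cd_\Q G=1$ to an actual splitting of $G$ as a fundamental group of a \emph{finite} graph of \emph{profinite} groups. This is an accessibility-type result, and I expect the delicate point to be checking that compact generation and $CO$-boundedness are exactly the right hypotheses---the former to bound the number of orbits of cells (finiteness of the graph), the latter to prevent the stabilisers from growing without bound and thereby escaping the family $\FCO$. Ruling out the pathological alternative, namely an action on a tree with compact open stabilisers but infinitely many orbits, is the crux of the argument.
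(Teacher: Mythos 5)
Your argument is correct and follows essentially the same route as the paper: the forward direction combines Proposition~\ref{easy-prop} with non-compactness (via $\cd_\Q G\geq 1$), and the reverse direction reduces to $\cd_\Q G=1$ and then invokes the splitting of a compactly generated, $CO$-bounded group of rational discrete cohomological dimension one as the fundamental group of a finite graph of profinite groups, whose Bass--Serre tree is a one-dimensional model for $E_{\FCO}G$. The only discrepancy is the attribution of that structural result: it is \cite[Theorem~B]{CMW}, not \cite{cw16}.
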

\begin{proof}  As $G$ is non-compact, $\cd_{\OCO} G\neq 0.$ Hence, by Proposition~\ref{easy-prop}, the first part of the claim is clear.  By Proposition~\ref{cd-rational-Bredon} and Remark~\ref{rem:rdcd}(a), $\cd_{\Q} G=1$. Assume that $G$ is compactly generated and $CO$-bounded. By~\cite[Theorem~B]{CMW}, $G$ is isomorphic to the fundamental group of a finite graph of profinite groups. The associated Bass-Serre tree is a one-dimensional model for $E_{\FCO}G$.
\end{proof}

\begin{rem}
For an arbitrary non-discrete t.d.l.c.~group $G$ it is still unknown whether $\cd_{\Q} G=1$ implies a proper continuous action on a tree.
\end{rem}

The following example emerged during a discussion of the first author with Bianca Marchionna and Thomas Weigel. This is intended as an aside observation, included for readers who may find it of interest (see~\cite{ab} for the definition of  building and related notions).
\begin{prop} Let  $\Delta$ be a locally finite building of type $(W,S)$. Suppose that $G$ is a closed subgroup of the type-preserving automorphism group $\mathrm{Aut}_0(\Delta)$. Then $\gd_{\FCO} G=\cd_{\OCO} G=\cd_\Q G$.
\end{prop}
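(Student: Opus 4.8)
The plan is to close a chain of inequalities and thereby reduce everything to a single rational lower bound. Applying the Main Theorem to the family $\FCO$ gives $\cd_\Q G \le \cd_{\MCOO} G \le \cd_{\OCO} G$, and Proposition~\ref{easy-prop} gives $\cd_{\OCO} G \le \gd_{\FCO} G$. Hence all three asserted equalities follow at once from the reverse inequality $\gd_{\FCO} G \le \cd_\Q G$, and the remainder of the argument is devoted to this.

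First I would fix the geometric model. Let $X$ be the Davis realisation of $\Delta$; it is a complete $\mathrm{CAT}(0)$ space, hence contractible, on which $G$ acts cellularly by isometries. Local finiteness of $\Delta$ together with closedness of $G$ in $\mathrm{Aut}_0(\Delta)$ forces every cell stabiliser to be compact and open, so $X$ is a proper $G$-CW-complex with isotropy in $\FCO$. By the $\mathrm{CAT}(0)$ fixed-point theorem each compact subgroup fixes a point, with convex and hence contractible fixed-point set, whereas no non-compact subgroup fixes a point; thus $X$ is a model for $E_\FK G$, and the $G$-homotopy equivalence $E_\FK G \simeq E_{\FCO} G$ recorded above yields $\gd_{\FCO} G \le \dim X$. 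In parallel, the cellular chain complex $C_\ast(X;\Q)$ is a resolution of the trivial module $\Q$ of length $\dim X$ by proper discrete permutation $\Q[G]$-modules, one summand $\Q[G/G_\sigma]$ per orbit of cells $\sigma$ with $G_\sigma \in \FCO$; this gives $\cd_\Q G \le \dim X$. Since the common value depends on $G$ (for instance it is $0$ when $G$ is a compact chamber stabiliser), I would next replace $X$ by a smallest non-empty $G$-invariant convex subcomplex $Y$. Being $G$-invariant and $\mathrm{CAT}(0)$, $Y$ is again a model for $E_\FK G$, so $\gd_{\FCO} G \le \dim Y$, and it remains to prove the matching lower bound $\cd_\Q G \ge \dim Y$.

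Here the building structure enters decisively. The links in $\Delta$ are spherical buildings, so by the Solomon--Tits theorem their reduced rational homology is concentrated in the top degree and is non-zero (the \emph{Steinberg module}). This Cohen--Macaulay behaviour is what I would use to show that the top-dimensional cell orbits of $Y$ cannot be discarded rationally, i.e.\ that $C_\ast(Y;\Q) \to \Q$ cannot be replaced by a shorter resolution by proper discrete permutation modules; this yields $\cd_\Q G \ge \dim Y$, and combined with the chain above forces $\dim Y = \cd_\Q G = \cd_{\MCOO} G = \cd_{\OCO} G = \gd_{\FCO} G$. The main obstacle is precisely this rational lower bound: in general $\cd_\Q G$ and $\cd_{\OCO} G$ may be distinct, which is the content of Question~\ref{dim-qu1}, so no soft comparison can substitute for the Solomon--Tits non-vanishing that pins the rational dimension to the geometric one. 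The delicate technical points are the existence of the minimal invariant subcomplex $Y$ when $G$ fixes a point at infinity, and, when $G$ does not act cocompactly on $Y$, the careful passage between the top compactly supported cohomology of $Y$ and the rational discrete cohomology of $G$; both are where the non-cocompact, non-discrete nature of $G$ makes the statement genuinely harder than its discrete analogue.
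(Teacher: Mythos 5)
Your reduction of all three equalities to the single inequality $\gd_{\FCO} G \le \cd_\Q G$ is exactly the paper's strategy, and your upper chain $\cd_\Q G \le \cd_{\OCO} G \le \gd_{\FCO} G$ matches Propositions~\ref{cd-rational-Bredon} and~\ref{easy-prop}. The gap lies in how you propose to close the chain. The paper takes the \emph{Bestvina} realisation $|\Delta|$ of the building, a model for $E_{\FCO}G$ whose dimension equals $\cd_\Q(W)$ by construction, and then quotes the identity $\cd_\Q(G)=\cd_\Q(W)$ from \cite{CMWarxiv}; this gives $\gd_{\FCO}G\le\dim|\Delta|=\cd_\Q G$ at once. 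You instead work with the Davis ($\mathrm{CAT}(0)$) realisation, and that choice breaks the argument: the dimension of the Davis realisation is governed by the maximal cardinality of a spherical subset of $S$, which in general strictly exceeds $\cd_\Q(W)$ --- removing this discrepancy is precisely what Bestvina's complex is for. For a chamber-transitive action the only non-empty invariant subcomplex is the whole space, so your minimal subcomplex $Y$ is all of $X$, and your asserted lower bound $\cd_\Q G\ge\dim Y$ is then \emph{false} whenever $\cd_\Q(W)<\dim X$. Correspondingly, Solomon--Tits does not deliver what you want for the Davis realisation, whose top-degree compactly supported cohomology can vanish.

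Even in the affine case, where the two realisations happen to have the same dimension, the decisive step is only sketched: you describe what you ``would use'' Solomon--Tits for, and you yourself flag as unresolved both the existence of a minimal invariant convex subcomplex (which genuinely fails when $G$ stabilises a point at infinity --- a Borel subgroup acting on a Bruhat--Tits tree preserves every horoball, so there is no minimal one) and the passage from compactly supported cohomology of $Y$ to the rational discrete cohomology of a non-cocompact closed subgroup $G$. These are not routine verifications, and the proposal does not supply them; the paper sidesteps them entirely by citing the computation $\cd_\Q G=\cd_\Q W=\dim|\Delta|$ for the Bestvina realisation. To repair your argument you would either need to substitute the Bestvina realisation for the Davis one and then still prove (rather than assume) the identification of its dimension with $\cd_\Q G$, or import the external result exactly as the paper does.
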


\begin{proof} Recall that $\cd_\Q G\leq \cd_{\OCO} G\leq  \gd_{\FCO} G $ by Propositions~\ref{cd-rational-Bredon} and~\ref{easy-prop}. Let $|\Delta|$ denote the Bestvina realisation of the building $\Delta$ (cf.~\cite[Remark~10.4]{davis}). Its dimension is equal to $\cd_\Q(W)$, which also coincides with $\cd_\Q(G)$~\cite[Equation~(1.2)]{CMWarxiv}.
As $|\Delta|$ is a model for $E_{\FCO}G$, the claim follows.
\end{proof}



\begin{thebibliography}{10}
\bibitem{ab} P.~Abramenko and K.~S.~Brown, Buildings: theory and applications. Springer New York, 2008.



\bibitem{BB} W.~Bley and R.~Boltje, {\sl Cohomological Mackey functors in Number Theory}.  Journal of Number Theory,  2004, 105.1:1 -- 37.

 
 \bibitem{ccc} I.~Castellano and G.~Corob Cook,  {\em Finiteness properties of totally disconnected locally compact groups.} Journal of Algebra, 2020, 543: 54--97. 
 
\bibitem{CMW} I.~Castellano, B.~Marchionna  and Th.~Weigel, {\em Unimodular totally disconnected locally compact groups of rational discrete cohomological dimension one.} Mathematische Annalen, 2025, 392.1: 933--964.
\bibitem{CMWarxiv} I.~Castellano, B.~Marchionna  and Th.~Weigel. {\em Some invariants of totally disconnected locally compact groups: cohomology and combinatorics} (arxiv preprint) Available at \url{https://arxiv.org/abs/2408.15716}.
\bibitem{cw16} I.~Castellano and Th.~Weigel, {\em Rational discrete cohomology for totally disconnected locally compact groups}. Journal of  Algebra, 2016, 453: 101--159.
  
  \bibitem{cc} G.~Corob Cook, {\em Elementary amenable totally disconnected locally compact groups}, preprint, in preparation.
  \bibitem{davis} M.~W.~Davis, {\em Buildings are CAT (0).} Geometry and cohomology in group theory, Durham, 1994, 252.1: 108--123
\bibitem{degrijse} D.~Degrijse, {\em Bredon cohomological dimensions for proper actions and Mackey functors}. Forum Mathematicum, 2015, 27.6: 3667--3696.


\bibitem{trans} T.~tom Dieck, Transformation groups, Volume 8. Walter de Gruyter, 1987.


\bibitem{dress} A.~Dress,  Notes on the theory of representations of finite groups. Lecture Notes, Bielefeld, 1971, with two appendices, A: The Witt-ring as a Mackey functor, and B: A relation between Burnside- and Witt-rings.
  
\bibitem{ds} A.~Dress and C.~Siebeneicher, {\em The Burnside ring of profinite groups and the Witt vector construction}. Advances in Mathematics, 1988, 70.1: 87--132.
  
  \bibitem{green} J.~A.~Green, {\em Axiomatic representation theory for finite groups}. Journal of Pure and Applied Algebra, 1971, 1: 41--77.


\bibitem{kropholler-mislin} P.~H.~Kropholler and G.~Mislin, {\em Groups acting on finite-dimensional spaces with finite stabilisers.}
Commentarii Mathematici Helvetici,  1998, 73.1: 122--136. 


\bibitem{maclane} S.~Mac Lane, {\em Categories for the working mathematician}. Springer, 1971.

\bibitem{leary-nucinkis} I.~J.~Leary and B.~E.~A.~Nucinkis, {\sl Some groups of type VF}. Inventiones Mathematicae, 2003, 151.1: 135--165.



\bibitem{lueckbook} W.~L\"{u}ck, Transformation groups and algebraic K-theory, Lecture Notes in Mathematics, vol. 1408, Springer-Verlag, Berlin, 1989. 

\bibitem{luecksurvey} W.~L\"{u}ck, {\em Survey on classifying spaces for families of subgroups.} Infinite groups: geometric, combinatorial and dynamical aspects, Birkh\"auser, Basel, 2005, 269--322.


\bibitem{lueck-meintrup}  W.~L\"{u}ck, and D.~Meintrup,  {\sl On the universal space for group actions with compact isotropy}, Geometry and topology: {A}arhus (1998), Contemporary Mathematics, 258: 293--305.

\bibitem{mp-n} C.~Martinez-P\'erez, B.~E.~A.~Nucinkis, {\em Cohomological dimension of Mackey functors for infinite groups.}
Journal of the London Mathematical Society, 2006, 74.2: 379-396

\bibitem{mislin} G.~Mislin, {\em Equivariant $K$-homology of the classifying space for proper actions, Proper group actions and the Baum-Connes conjecture}. Advanced Courses in Mathematics - CRM Barcelona, Birkh\"auser, 2003.
  
\bibitem{nakaoka} H.~Nakaoka, {\em Tambara functors on profinite groups and generalised Burnside functors}. Communications in Algebra, 2009, 37: 3095--3151. 
  
\bibitem{N3} B.~E.~A.~Nucinkis, {\em Is there an easy algebraic characterisation of universal proper $G$-spaces?}. Manuscripta Mathematica, 2000, 102.3: 335--345.
  
  
 \bibitem{RZ} L.~Ribes and P.~Zalesskii, Profinite groups. Springer, 2000.

   

\bibitem{st} R.~Sauer and W.~Thumann, {\em Topological models of finite type for tree almost automorphism groups.} International Mathematics Research Notices, 2017, 23: 7292--7320.

  
  
\bibitem{stjg} S.~St.~John-Green, {\em Cohomological finiteness conditions for Mackey and cohomological Mackey functors.} Journal of Algebra, 2014, 411: 225--258.


\bibitem{tw-structure} J.~Thevenaz and P.~Webb, {\sl The structure of Mackey functors}. Transactions of the American Mathematical Society, 1996, 347: 1865--1961.  

\end{thebibliography}
\end{document}